\newcommand{\eqn}[0]{\begin{array}{rcl}}
\newcommand{\eqnend}[0]{\end{array} }  	
\newcommand\norm[1]{\left\lVert#1\right\rVert}
\newcommand\restr[2]{{
  \left.\kern-\nulldelimiterspace 
  #1
  \vphantom{\big|} 
  \right|_{#2} 
  }}
\def\P{\mathbb{P}}
\def\E{\mathbb{E}}
\def\Z{\mathbb{Z}}
\def\N{\mathbb{N}}
\def\R{\mathbb{R}}
\numberwithin{equation}{section}
\begin{document}	
\newtheorem{theorem}{Theorem}[section]
\newtheorem{prop}[theorem]{Proposition}

\theoremstyle{definition}
\newtheorem{definition}[theorem]{Definition}

\newtheorem{assumption}[theorem]{Assumption}

\theoremstyle{definition}
\newtheorem{remark}[theorem]{Remark}

\theoremstyle{definition}
\newtheorem{problem}[theorem]{Problem}

\theoremstyle{definition}
\newtheorem{example}[theorem]{Example}

\newtheorem{corollary}[theorem]{Corollary}

\newtheorem{lemma}[theorem]{Lemma}

\title{On the Rigidity of Projected Perturbed Lattices}
\author{Youssef Djellouli\footnote{Syracuse University, Department of Mathematics. Email:
\href{mailto:mydjello@syr.edu}{mydjello@syr.edu}.} \and Pierre Yves Gaudreau Lamarre\footnote{Syracuse University, Department of Mathematics. Email: \href{mailto:pgaudrea@syr.edu}{pgaudrea@syr.edu}.}}
\date{}
\maketitle

\begin{abstract}
We study the occurrence of number rigidity and deletion singularity in a class of point processes that we call {\it projected perturbed lattices}. These are generalizations of processes of the form $\Pi=\{\|z\|^\alpha+g_z\}_{z\in\Z^d}$ where $(g_z)_{z\in\Z^d}$ are jointly Gaussian, $\alpha>0$, $d\in\N$, and $\|\cdot\|$ is a norm. We develop a new technique to prove sufficient conditions for the deletion singularity of $\Pi$, which improves significantly on the conditions one can obtain using the standard rigidity toolkit (e.g., the variance of linear statistics). In particular, we obtain the first lower bounds on $\alpha$ for the deletion singularity of $\Pi$ that are independent of the dimension $d$ and the correlation of the $g_z$'s.
\end{abstract}

\section{Introduction}

\subsection{General Set-Up}
Let $G$ be a countably infinite set (typically some graph or lattice). We are interested in point processes of the form $\Pi=\{V(z)+g_{z}\}_{z\in G}$, where $V:G\to\R$ is a deterministic function and $\{g_z\}_{z\in G}$ are random variables such that $\Pi$ is locally finite almost surely. For $d\in\N$, a recurrent example will be the projected perturbed lattice model
\begin{align}
\label{eqn: Zd model}
\{\norm{z}^\alpha +g_z\}_{z\in\Z^d},
\end{align}
where $\norm{\cdot}$ is some norm on $\R^d$, $\alpha>0$ is fixed, and $(g_z)_{z\in\Z^d}$ is a gaussian process. This is motivated by its use as a toy model of the eigenvalue process of the random Schrödinger operator $\hat{H}=-\Delta+\norm{z}^\alpha+g_z$ using the standard Laplacian on $\Z^d$ (our hope is that the methods in this paper could eventually be applied to this model).

In this paper, we are interested in studying the occurrence of the following closely related notions of ``rigidity" in these point processes:

\begin{definition}
For any $S\subset G$, denote $\Pi_S=\{V(z)+g_{z}\}_{z\in G\setminus S}$, i.e., $\Pi$ with the points coming from $S$ removed.
\begin{itemize}
\item $k$-deletion tolerance/singularity \cite{HolroydSoo}: Let $k\in\N$. We say that $\Pi$ is $k$-deletion tolerant (resp. $k$-deletion singular), if for any $S\subset G$ such that $|S|=k$, the laws of $\Pi$ and $\Pi_S$ are mutually absolutely continuous (resp. singular).
If $\Pi$ is $k$-deletion tolerant (resp. singular) for every $k\geq1$,
then we simply say that it is deletion tolerant (resp. singular).
We can think of deletion singularity as the ability to almost-surely detect if a set of points was deleted from the process, and deletion tolerance as the inability to do so.
\item Number Rigidity \cite{GhoshPeres}: We say that $\Pi$ is number rigid if for every bounded Borel set $B\subset\R$ there exists a measurable function $F$  such that $F(\Pi\cap B^c)=|\Pi\cap B|$ almost surely. In words, this consists in the ability to almost-surely determine how many points are in $B$ only by observing the configuration of points outside $B$.
\end{itemize}
\end{definition}

See, for instance, \cite{HolroydSoo,GhoshKrishnapur,GhoshPeres} for additional notions of ``rigidity" such as insertion tolerance/singularity, and rigidity for particular functions. The relationship between deletion singularity and number rigidity has been studied extensively. For instance, the following is a special case of \cite[Proposition 1.2]{PeresSly}:
\begin{prop}[\cite{PeresSly}]
\label{prop: equivalence of singularity and rigidity}
If the random variables $(g_z)_{z\in G}$ are independent and each have a density that is positive on all of $\R$, then
number rigidity is equivalent to deletion singularity.
\end{prop}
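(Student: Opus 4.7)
The statement is an if-and-only-if and the two directions have quite different flavors, so I would prove them separately. For \emph{number rigidity} $\Rightarrow$ \emph{deletion singularity}, fix $S \subset G$ with $|S| = k < \infty$. Let $B_n = [-n,n]$ and, for each $n$, choose a measurable function $F_n$ realizing the number rigidity identity $F_n(\Pi|_{B_n^c}) = |\Pi \cap B_n|$ almost surely. Under the natural coupling $\Pi_S = \Pi \setminus \{V(z) + g_z : z \in S\}$, almost surely the $k$ deleted points all eventually lie inside $B_n$, giving $F_n(\Pi_S|_{B_n^c}) = F_n(\Pi|_{B_n^c}) = |\Pi \cap B_n| = |\Pi_S \cap B_n| + k$ for all large $n$. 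Setting
\[
L(\omega) := \limsup_{n \to \infty} \bigl(F_n(\omega|_{B_n^c}) - |\omega \cap B_n|\bigr),
\]
we get $L = 0$ almost surely under the law of $\Pi$ and $L = k$ almost surely under the law of $\Pi_S$, so $\{L = 0\}$ witnesses the singularity of the two laws.

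For the converse \emph{deletion singularity} $\Rightarrow$ \emph{number rigidity}, I would argue by contrapositive. Suppose $\Pi$ fails number rigidity: there exist a bounded Borel set $B$, a measurable event $A$ in the configuration space of $B^c$, and integers $n_1 < n_2$ such that
\[
\Pr\bigl(\Pi|_{B^c} \in A,\ |\Pi \cap B| = n_i\bigr) > 0 \quad (i = 1, 2).
\]
Let $k := n_2 - n_1$ and fix any $S \subset G$ with $|S| = k$. By independence, the law of $\Pi$ factors as that of $\Pi_S \cup \{V(z) + g_z : z \in S\}$, with the $k$ added points independent of $\Pi_S$ and each with a density positive on $\R$. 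On the event $\mathcal{C} = \{V(z) + g_z \in B \ \forall z \in S\}$, which has positive probability conditional on any realization of $\Pi_S$ (by positivity of the $\rho_z$ and $|B| > 0$), one has $\Pi|_{B^c} = \Pi_S|_{B^c}$ and $|\Pi \cap B| = |\Pi_S \cap B| + k$. A first consequence is that $\Pr(\Pi_S|_{B^c} \in A, |\Pi_S \cap B| = n_1) > 0$, reached by integrating against $\mathcal{C}$. The next step is to use positivity of the joint density of the added $k$-tuple on $B^k$ to produce a measure $\nu$ that is absolutely continuous with respect to both the law of $\Pi$ (restricted to $\{\Pi|_{B^c} \in A, |\Pi \cap B| = n_2\}$) and the law of $\Pi_S$ (restricted to $\{\Pi_S|_{B^c} \in A, |\Pi_S \cap B| = n_1\}$), contradicting singularity.

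The main obstacle is this last construction. The ``insert/delete $k$ points'' operation is absolutely continuous at the level of the $(g_z)_{z \in S}$-randomness, a statement on $\R^k$; passing this to absolute continuity at the level of \emph{configurations}, where points are unordered and the labels $z \in G$ are not observable, requires careful bookkeeping. I expect the cleanest route is to work with the labelled processes $(V(z) + g_z)_{z \in G}$ and $(V(z) + g_z)_{z \in G \setminus S}$, where the densities of the two product measures can be compared directly, and then to project back to configuration space via the map that forgets labels, verifying that the resulting pushforward measures retain the absolute-continuity relation on the strata of fixed point count inside $B$. The independence of the $g_z$'s and positivity of their densities are exactly what allows this comparison of Radon--Nikodym derivatives to survive the projection.
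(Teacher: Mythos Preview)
The paper does not prove this proposition at all: it is stated as a special case of \cite[Proposition~1.2]{PeresSly} and invoked as a black box, so there is no in-paper argument to compare against. I will therefore comment on your proof on its own terms.

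Your forward direction (number rigidity $\Rightarrow$ deletion singularity) is correct and is the standard argument. The functional $L$ is measurable on configuration space, and the coupling computation showing $L(\Pi)=0$ and $L(\Pi_S)=k$ almost surely is clean; the only point worth making explicit is that the a.s.\ identity $F_n(\Pi|_{B_n^c})=|\Pi\cap B_n|$ transfers to the coupled probability space, which it does since $\Pi$ has the same law there.

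Your reverse direction is, as you yourself flag, not a proof but a plan. The sketch up to ``a first consequence is that $\Pr(\Pi_S|_{B^c}\in A,\ |\Pi_S\cap B|=n_1)>0$'' is fine, but the final step---producing a common dominated measure $\nu$ on configuration space---is the entire content of the implication, and you have not carried it out. The difficulty you name is real: at the level of labelled sequences in $\R^G$ the Radon--Nikodym comparison is immediate from independence and positive densities, but pushing forward to unlabelled configurations can destroy absolute continuity in principle, because distinct labelled sequences can map to the same configuration. What makes the argument go through here is that, conditionally on $\Pi_S$, the $k$ points $\{V(z)+g_z:z\in S\}$ have a joint density on $\R^k$ that is everywhere positive, so adding them and then forgetting labels yields a law on configurations that dominates (on the relevant event) the law obtained by adding \emph{any} other $k$-tuple with positive density on $B^k$---in particular the conditional law of the ``extra'' $k$ points of $\Pi$ in $B$ given $\Pi|_{B^c}\in A$ and $|\Pi\cap B|=n_2$. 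Writing this out carefully (e.g.\ via disintegration with respect to the outside configuration and the unordered set of inside points) is what is missing; until that is done, the reverse implication remains a heuristic.
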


The notions of rigidity considered in this paper were first introduced in
\cite{HolroydSoo,GhoshPeres}. Since these original developments, these notions have been studied very intensively. For instance, various combinations of rigidity and deletion singularity have been proved for
determinantal and Pfaffian point processes (e.g., \cite{BufetovAiry,Buf1,Buf4,Buf2,Buf3,GhoshKrishnapur,Heisenberg,Qiu}),
point processes that arise as scaling limits of random matrix or statistical physics ensembles (e.g., \cite{ChhaibiNajnudel,Sine-BetaDLR,RigiditySAO,RigidityMSAO,Ghosh,RieszGas}),
perturbed lattice type models (e.g., \cite{PeresSly, GhoshKrishnapur}),
random Schr\"odinger spectra (e.g., \cite{RigidityMSAO,LamarreGhosalLiaoSpectralRigidity,LamarreGhosalLiaoSpacialConditioning}),
and more.

In this paper, we develop a new set of tools to study these notions for $\Pi$.

\subsection{Previous Results and Motivation} 
Two techniques have been developed and are commonly used to prove or disprove rigidity for various point processes. The first of these relies on using a pushforward to view the point process as an ordered random vector in $\R^\N$ in order to show deletion tolerance. The second technique studies the decay of the variance of certain linear test statistics to prove rigidity.
More specifically:

\begin{prop}\label{prop: Kakutani} \cite{GhoshPeres, GhoshKrishnapur}
Let us enumerate the elements of $G$ as $G=\{z_1,z_2,z_3,\ldots\}$.
Let $P=(V(z_i)+g_{z_i})_{i\geq 1}$ and $P_S=(V(z_i)+g_{z_i})_{i\geq1:z_i\not\in S}$ be random vectors in $\R^\N$ whose components are given by the points in $\Pi$ and $\Pi_S$, consistently with
the order of the enumeration of $G$'s elements. If $P$ and $P_S$ are mutually absolutely continuous, then $\Pi$ and $\Pi_S$ are mutually absolutely continuous.
\end{prop}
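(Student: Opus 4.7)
The plan is to realize the laws of $\Pi$ and $\Pi_S$ as pushforwards of the laws of $P$ and $P_S$ under the same measurable ``forget the order'' map, and then invoke the elementary fact that pushforwards preserve absolute continuity.

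First, I would set up the target space. Let $\mathcal{N}$ denote the space of locally finite counting measures on $\R$, equipped with its standard $\sigma$-algebra generated by the evaluations $\mu\mapsto\mu(B)$ for bounded Borel $B\subset\R$. The identification of a point process with its associated random counting measure lets us view $\Pi$ (resp.\ $\Pi_S$) as a random element of $\mathcal{N}$. I would then define the ``forget the order'' map
\[
\pi:\R^\N\to\mathcal{N},\qquad \pi(x_1,x_2,\ldots)=\sum_{i\geq 1}\delta_{x_i},
\]
on the measurable subset of $\R^\N$ where the right-hand side is locally finite. Measurability of $\pi$ follows from checking that $\pi^{-1}\{\mu:\mu(B)=k\}=\{x:\sum_i\mathbf{1}_B(x_i)=k\}$ is measurable for all bounded Borel $B$ and $k\in\N$. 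By the standing hypothesis that $\Pi$ (and hence $\Pi_S$) is locally finite a.s., this subset has full measure under both $\mathrm{Law}(P)$ and $\mathrm{Law}(P_S)$.

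Second, I would observe that, directly from the construction of $\Pi$ from $P$ and of $\Pi_S$ from $P_S$, one has the pushforward identifications
\[
\mathrm{Law}(\Pi)=\pi_*\mathrm{Law}(P),\qquad \mathrm{Law}(\Pi_S)=\pi_*\mathrm{Law}(P_S),
\]
where on the second line $\pi$ is applied to the sub-enumeration of $G\setminus S$ inherited from the enumeration of $G$. The crucial point is that the same forgetful map $\pi$ is used in both cases: passing from the ordered vector to the point process consists only of discarding the labels supplied by the enumeration.

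Finally, I would invoke the elementary fact that pushforwards preserve mutual absolute continuity: if $\mu,\nu$ are mutually absolutely continuous on a measurable space $(X,\mathcal{F})$ and $f:X\to Y$ is measurable, then $f_*\mu,f_*\nu$ are mutually absolutely continuous on $Y$, since for measurable $A\subset Y$ the equality $f_*\nu(A)=\nu(f^{-1}(A))=0$ forces $\mu(f^{-1}(A))=0=f_*\mu(A)$, and symmetrically. Applying this with $\mu=\mathrm{Law}(P)$, $\nu=\mathrm{Law}(P_S)$ and $f=\pi$ yields the claim. There is no substantive obstacle in this argument; the only points that need genuine (but routine) care are the measurability of $\pi$ into $\mathcal{N}$ and the pushforward identifications of the previous paragraph, both of which are standard in the measurable theory of point processes.
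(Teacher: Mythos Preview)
The paper does not supply its own proof of this proposition; it is stated with citations to \cite{GhoshPeres,GhoshKrishnapur} and used as a black box. Your argument is correct and is precisely the standard pushforward mechanism that the paper implicitly relies on---indeed, when applying this proposition in Section~\ref{Proof of Variance Obstacle (1)}, the paper explicitly writes ``Since pushforwards preserve absolute continuity it suffices to show\ldots,'' which is exactly your step. So your proposal matches both the intended content and the paper's own usage of the result.
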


\begin{prop}\label{prop: variance linear statistics} \cite{GhoshPeres} If there exists a sequence of Borel measurable functions $f_n:\R\to\R$ that converge uniformly to $1$ on every compact set and such that
\[\lim_{n\to\infty}\mathrm{Var}\left(\sum\limits_{a\in\Pi}f_n(a)\right)=0,\] then $\Pi$ is number rigid
and deletion singular.
\end{prop}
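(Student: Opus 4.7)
The plan is to combine the variance hypothesis with a subsequence extraction, then decompose the test statistic in two ways: inside/outside $B$ for number rigidity, and along/off the deleted set for deletion singularity.

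Set $S_n = \sum_{a\in\Pi}f_n(a)$ and $T_n = S_n - \mathbb{E}[S_n]$. The variance hypothesis gives $T_n\to 0$ in $L^2$, hence in probability, so along some subsequence $n_k$ one has $T_{n_k}\to 0$ almost surely.

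For number rigidity, fix a bounded Borel set $B\subset\R$ and split $S_n = S_n^{\mathrm{in}} + S_n^{\mathrm{out}}$ with $S_n^{\mathrm{in}} = \sum_{a\in\Pi\cap B}f_n(a)$. Since $\Pi$ is locally finite, $\Pi\cap B$ is an almost surely finite subset of a compact set containing $B$, so uniform convergence of $f_n$ on that compact set yields $S_n^{\mathrm{in}}\to|\Pi\cap B|$ almost surely. Along the subsequence,
\[
S_{n_k}^{\mathrm{out}} - \mathbb{E}[S_{n_k}] \;=\; T_{n_k} - S_{n_k}^{\mathrm{in}} \;\longrightarrow\; -|\Pi\cap B| \quad\text{a.s.}
\]
Since $S_n^{\mathrm{out}}$ is $\sigma(\Pi\cap B^c)$-measurable and the centering constants are deterministic, $|\Pi\cap B|$ is the almost sure limit of a $\sigma(\Pi\cap B^c)$-measurable sequence, hence itself $\sigma(\Pi\cap B^c)$-measurable; this gives number rigidity.

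For deletion singularity, fix $S\subset G$ with $|S|=k$ and couple $\Pi_S$ to $\Pi$ by removing the points $V(z)+g_z$, $z\in S$. Define $\psi_k(\Xi) := \sum_{a\in\Xi}f_{n_k}(a) - \mathbb{E}[S_{n_k}]$ on configuration space, so that $\psi_k(\Pi) = T_{n_k}\to 0$ a.s., while
\[
\psi_k(\Pi_S) \;=\; T_{n_k} \;-\; \sum_{z\in S}f_{n_k}\bigl(V(z)+g_z\bigr) \;\longrightarrow\; -k \quad\text{a.s.,}
\]
since each $V(z)+g_z$ is almost surely finite, so uniform-on-compacts convergence forces $f_{n_k}(V(z)+g_z)\to 1$ a.s. The measurable event $\{\Xi:\psi_k(\Xi)\to 0\}$ therefore has probability one under the law of $\Pi$ and probability zero under the law of $\Pi_S$, proving mutual singularity. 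The only mild technicality is the $L^2$-to-almost-sure passage via a subsequence; everything else is a direct bookkeeping of the deterministic shift by $-k$ that removing $k$ points produces in the limit of the centered linear statistic.
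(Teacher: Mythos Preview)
The paper does not supply its own proof of this proposition; it is quoted as a known result from \cite{GhoshPeres}. Your argument is correct and is essentially the standard one from that reference: pass to an almost-sure subsequence using the variance decay, split the linear statistic into its contributions inside and outside $B$ (respectively, on and off the deleted set), and read off $|\Pi\cap B|$ (respectively, the deletion count $k$) as the almost-sure limit of quantities measurable with respect to the appropriate sub-$\sigma$-algebra or configuration functional. One minor point worth making explicit is that the functional $\psi_k(\Xi)=\sum_{a\in\Xi}f_{n_k}(a)-\mathbb E[S_{n_k}]$ need not be finite for arbitrary locally finite $\Xi$; you can simply set it to $+\infty$ on configurations where the sum fails to converge absolutely, which preserves measurability and does not affect the argument since both $\Pi$ and $\Pi_S$ assign full mass to configurations on which the sum is finite.
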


Proposition \ref{prop: Kakutani} is especially powerful when the noise is independent, because it can be used in concert with Kakutani's dichotomy to give a characterization of absolute continuity that relies largely on computational methods. 
Proposition \ref{prop: variance linear statistics} is often easy to apply because
\begin{align}
\label{eqn: Linear stat variance to covariance}
\mathrm{Var}\left(\sum_{a\in\Pi}f(a)\right)=\sum_{a,b\in\Pi}\mathrm{Cov}\big(f(a),f(b)\big),
\end{align}
which further simplifies to $\sum_{a\in\Pi}\mathrm{Var}\big(f(a)\big)$ when the points in $\Pi$ are independent. Like Proposition \ref{prop: Kakutani}, this often reduces the issue of rigidity to a largely computational question that relies on pairwise correlations within $\Pi$.

\begin{remark}
\label{rmk: scaling linear statistic}
In actual applications of Proposition \ref{prop: variance linear statistics}
(e.g., \cite{LamarreGhosalLiaoSpacialConditioning,LiREU,GhoshPeres,HolroydSoo}),
it is customary to define $f_n(x)=f(x/n)$ for a smooth test function $f$ satisfying $f(0)=1$.
In many cases (e.g., Gaussian noise), this specific choice makes the computation of $\mathrm{Var}\big(f(a)\big)$ amenable to analysis by elementary methods, such as change of variables.
\end{remark}

\begin{remark}
\label{rmk: covariance trick}
If $\mathrm{Var}\left(\sum_{a\in\Pi}f_n(a)\right)$
is only bounded above as $n\to\infty$, then one can still prove deletion singularity/number rigidity if for every fixed $m\in\mathbb N$, one has
\[\lim_{n\to\infty}\mathrm{Cov}\left(\sum_{a\in\Pi}f_n(a),\sum_{a\in\Pi}f_m(a)\right)=0.\]
This more general sufficient condition was used in \cite[Lemma 7.2]{HolroydSoo} to prove the deletion singularity/number rigidity of two-dimensional perturbed lattices, and in \cite{RigiditySAO,RigidityMSAO} to prove the deletion singularity/number rigidity of stochastic Airy operators.
\end{remark}

Examples of prior results that use Propositions \ref{prop: Kakutani} and \ref{prop: variance linear statistics} to investigate the rigidity of simple special cases of $\Pi$ are as follows:

\begin{theorem}
\label{thm: Past 1}
Let $(g_k)_{k\in\Z}$ be i.i.d. standard Gaussians, and suppose that $\Pi$ is either
\begin{enumerate}
\item $\{|k|^\alpha+g_k\}_{k\in\Z}$ for some $\alpha>0$, or
\item $\{k+k^\beta g_k\}_{k\in\Z}$ for some $\beta>0$.
\end{enumerate}
\cite[Theorem 5.1]{LiREU}: In case 1., $\Pi$ is deletion tolerant if $\alpha<1/2$, and deletion singular if $\alpha>1/2$. \cite[Theorem 2.5]{GhoshKrishnapur}: In case 2, $\Pi$ is deletion tolerant if $\beta>1/2$, and $\Pi$ is deletion singular if $\beta<1/2$.

\end{theorem}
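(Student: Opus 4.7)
The plan is to use Proposition \ref{prop: Kakutani} for the tolerance statements and Proposition \ref{prop: variance linear statistics} for the singularity statements. Since the $g_k$ are i.i.d.\ standard Gaussians, the law of $P=(V(z_i)+g_{z_i})_{i\geq 1}$ on $\R^\N$ is a product of Gaussians. Upon deleting a finite set $S$ and re-indexing $G\setminus S$ by the order-preserving bijection with $\N$, we compare $P$ with a ``shifted'' product measure $P_S'$ whose $i$-th marginal equals the $(i+|S|)$-th marginal of $P$ for $i$ sufficiently large. Kakutani's dichotomy then reduces mutual absolute continuity to the question of whether $\sum_i H^2(\mu_i,\nu_i)<\infty$, where $H^2$ denotes the squared Hellinger distance between the corresponding Gaussian marginals. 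For singularity, I will take $f_n(x)=f(x/n)$ for a smooth compactly supported $f$ with $f(0)=1$; by independence, $\mathrm{Var}\left(\sum_k f_n(V(k)+g_k)\right)=\sum_k\mathrm{Var}(f_n(V(k)+g_k))$.

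In Case 1, enumerate $\Z$ as $z_1=0,z_2=1,z_3=-1,z_4=2,z_5=-2,\dots$, so $|z_i|\asymp i$. For a one-point deletion, the relevant Hellinger distances are between $N(|z_i|^\alpha,1)$ and $N(|z_{i+1}|^\alpha,1)$, giving $H^2\asymp (|z_i|^\alpha-|z_{i+1}|^\alpha)^2\asymp i^{2(\alpha-1)}$ via the mean value theorem. Summability of $\sum i^{2\alpha-2}$ is equivalent to $\alpha<1/2$, yielding tolerance; the general $k$-deletion case only shifts indices by $k$ and preserves the threshold. For singularity, Taylor-expanding $f_n$ around $|k|^\alpha$ (which for large $|k|$ dominates the $O(1)$ fluctuation $g_k$) yields $\mathrm{Var}(f_n(|k|^\alpha+g_k))\approx f'(|k|^\alpha/n)^2/n^2$. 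Approximating $\sum_k f'(|k|^\alpha/n)^2/n^2$ by a Riemann sum and substituting $u=k/n^{1/\alpha}$ produces an integral of order $n^{1/\alpha-2}$, which vanishes as $n\to\infty$ iff $\alpha>1/2$.

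Case 2 proceeds analogously, but the heteroscedasticity $\sigma_k=|k|^\beta$ now enters. For tolerance, the squared Hellinger distance between $N(z_i,z_i^{2\beta})$ and $N(z_{i+1},z_{i+1}^{2\beta})$ (with $|z_i|\asymp i$) decomposes into a mean contribution of order $(z_i-z_{i+1})^2/(z_i^{2\beta}+z_{i+1}^{2\beta})\asymp i^{-2\beta}$ and a scale contribution of order $(z_i^\beta-z_{i+1}^\beta)^2/(z_i^{2\beta}+z_{i+1}^{2\beta})\asymp i^{-2}$. The scale term is always summable, so the threshold is governed by $\sum i^{-2\beta}<\infty$, i.e.\ $\beta>1/2$. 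For singularity, a similar Taylor expansion gives $\mathrm{Var}(f_n(k+k^\beta g_k))\approx f'(k/n)^2 k^{2\beta}/n^2$; summing over $|k|\lesssim n$ (the effective support of $f'(\cdot/n)$) and substituting $u=k/n$ gives an estimate of order $n^{2\beta-1}$, which vanishes iff $\beta<1/2$.

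The main obstacle throughout is justifying the Taylor and Hellinger approximations uniformly in $k$: one must show that the Gaussian tails of $g_k$ do not contribute to the variance at leading order (so that the derivative approximation is not spoiled by rare large excursions where $V(k)+g_k$ leaves the support of $f$ or the linearization neighborhood), and, in Case 2, that the heteroscedasticity interacts correctly with the Hellinger formula. This is routine via sub-Gaussian concentration and the explicit Gaussian Hellinger identity, but it is the only place that requires genuine care.
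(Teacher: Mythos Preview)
The paper does not give its own proof of this theorem: it is stated purely as a citation of prior results (\cite{LiREU} for Case 1, \cite{GhoshKrishnapur} for Case 2), so there is no in-paper argument to compare against directly. That said, the paper does prove the $d$-dimensional generalization of Case~1 (Theorem~\ref{thm: Zd Classical} and Proposition~\ref{prop: variance obstacle}~(1)), and its approach there largely matches yours: for tolerance, the paper also enumerates the lattice in order of increasing norm and reduces to an $\ell^2$ condition on the mean shifts (invoking Shepp's criterion rather than the Hellinger form of Kakutani, but these are equivalent for Gaussians with equal variance). Your outline for Case~1 tolerance is therefore essentially the paper's own argument specialized to $d=1$.

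The one notable methodological difference concerns the singularity direction in Case~1. You use a compactly supported smooth $f$ and a Taylor/Riemann-sum heuristic to obtain $\mathrm{Var}\asymp n^{1/\alpha-2}$; the paper instead takes the exponential test functions $f_n(x)=\mathrm e^{-x/n}$, computes the lognormal variance exactly, and obtains the same threshold without any linearization step. The exponential choice has the advantage of sidestepping the ``main obstacle'' you flag (controlling Gaussian tails in the Taylor approximation), at the cost of requiring $V$ bounded below. Both routes are standard and lead to the same $\alpha>1/2$ conclusion; neither is proved in full for Case~2 in this paper, so your sketch there stands on its own and looks correct.
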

\begin{theorem}
\label{thm: Past 2}
\cite[Theorem 1.4]{PeresSly}: Let $\Pi=\{k+g_k\}_{k\in\Z}$, where $(g_k)_{k\in\Z}$ are i.i.d. symmetric $\alpha$-stable random variables. If $\alpha<1$ then $\Pi$ is deletion tolerant, and if $\alpha\geq1$ then $\Pi$ is deletion singular.
\end{theorem}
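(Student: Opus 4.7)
The plan is to use the two propositions from the excerpt on opposite sides of the dichotomy. Throughout I assume $S=\{0\}$; the general finite-$S$ case follows by iteration.

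For $\alpha\geq 1$ (singular), I would apply Proposition~\ref{prop: variance linear statistics}. The naive Gaussian-style choice $f_n(x)=\phi(x/n)$ fails because $\mathrm{Var}(g_k)=\infty$ for $\alpha<2$; instead I would take bounded Fourier-type test functions $f_n(x)=\cos(\xi_n x)$ with $\xi_n\downarrow 0$, which converge to $1$ uniformly on compacts. Using independence of the $g_k$'s together with the explicit characteristic function $\hat\mu(\xi)=e^{-|\xi|^\alpha}$, the variance $\mathrm{Var}\bigl(\sum_{a\in\Pi} f_n(a)\bigr)$ reduces to an explicit trigonometric sum over $\Z$. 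Combining an equidistribution / Poisson-summation bound for $\sum_k\cos(\xi_n k)$ with the decay factor coming from $\hat\mu$, I expect an upper bound of the form $C|\xi_n|^{\alpha-1}$ (up to logarithms), vanishing precisely when $\alpha>1$; the boundary case $\alpha=1$ would then require the covariance refinement of Remark~\ref{rmk: covariance trick} or a slightly sharper test family.

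For $\alpha<1$ (tolerant), Proposition~\ref{prop: Kakutani} is the natural tool, but its direct application is hopeless: any enumeration of $\Z$ induces a bijection $\sigma:\Z\to\Z\setminus S$ that must shift infinitely many indices by at least one unit, so the Kakutani sum $\sum_i H^2(\delta_{V(z_i)}*\mu,\,\delta_{V(z_{\sigma(i)})}*\mu)=\infty$ whenever $\mu$ is not translation-invariant. My plan is therefore to construct a coupling of the random vectors $P$ and $P_S$ that does \emph{not} arise from a coordinate-wise bijection, but instead ``absorbs'' the removed point $g_0$ into the noise of a randomly chosen far-away lattice site. Concretely, one conditions on the existence of a large index $k$ at which $g_k$ is heavy-tail-typical, and then performs a joint rearrangement that swaps $(g_0,g_k)$ for a pair $(g_0',g_k')$ consistent with $\Pi_S$ plus one inserted perturbation; for $\alpha<1$ the density $\mu(x)\sim c|x|^{-(1+\alpha)}$ is spread enough that this rearrangement has a bounded, integrable Radon--Nikodym derivative, while for $\alpha\geq 1$ the lighter tail makes the rearrangement detectable.

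The hardest step will be the tolerance direction. Formalizing the ``absorption coupling'' and verifying that the induced likelihood ratio is integrable lies beyond the immediate reach of Proposition~\ref{prop: Kakutani} and is what actually pins the threshold at $\alpha=1$. I expect the key estimate to reduce to a single dichotomous integrability condition on the $\alpha$-stable density, an integral of the form $\int\!\sqrt{\mu(x)\mu(x-t)}\,dx$ or a closely related Hellinger affinity summed over shifts, which switches behavior at $\alpha=1$ because of the $|x|^{-(1+\alpha)}$ tail.
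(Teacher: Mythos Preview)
The paper contains no proof of this statement. Theorem~\ref{thm: Past 2} appears in the ``Previous Results and Motivation'' subsection and is stated purely as a citation of \cite[Theorem~1.4]{PeresSly}; no argument for it is given anywhere in the present paper. There is therefore nothing here to compare your proposal against.

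As a side remark: the only place the paper discusses Peres--Sly methodology is Section~\ref{Section: Outline}, and there it describes the proof of a \emph{different} Peres--Sly result (their Theorem~1.3, on Gaussian-perturbed lattices in $\Z^d$). That argument proceeds by matching points to lattice sites and analyzing averaged deviations along paths via greedy lattice animals --- not via linear statistics or Kakutani-type couplings. Your Fourier test-function idea for $\alpha\geq 1$ is a reasonable heuristic but is not what Peres and Sly actually do, and your tolerance sketch for $\alpha<1$ is, as you yourself concede, largely speculative. To see how the $\alpha$-stable threshold at $\alpha=1$ is actually established you would need to consult \cite{PeresSly} directly.
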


\begin{remark}
Thanks to Proposition \ref{prop: equivalence of singularity and rigidity},
Theorems \ref{thm: Past 1} and \ref{thm: Past 2} also lead to a characterization of number rigidity
for the examples of $\Pi$ considered therein.
\end{remark}

In this paper, our main motivation is to extend these results to more general instances of the model introduced in \eqref{eqn: Zd model}. In particular, we wish to understand how the choice of the dimension of the lattice $d$, the norm $\norm{\cdot}$,
the exponent $\alpha$, and the covariance function of the Gaussian process $(g_z)_{z\in\Z^d}$ each influence the rigidity of that model. As a first step in this direction, an application of
Propositions \ref{prop: Kakutani} and \ref{prop: variance linear statistics} yields:

\begin{theorem}
\label{thm: Zd Classical}
Let $\Pi$ be as in \eqref{eqn: Zd model}, assuming for simplicity that $\norm{\cdot}$ is the $\ell^1$ or $\ell^\infty$ norm, and that $(g_z)_{z\in\Z^d}$ are i.i.d. Gaussians with mean zero and variance $\sigma^2>0$. $\Pi$ is deletion tolerant if $\alpha<1/2$, and deletion singular if $\alpha> d/2$.
\end{theorem}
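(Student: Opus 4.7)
The two halves of the theorem are proved separately, applying Proposition \ref{prop: Kakutani} for tolerance and Proposition \ref{prop: variance linear statistics} for singularity. For the tolerance bound when $\alpha < 1/2$, enumerate $\Z^d$ as $z_1, z_2, \ldots$ in order of increasing norm, breaking ties arbitrarily. For any $S\subset\Z^d$ with $|S|=k$, both $P$ and $P_S$ are then products of independent $N(\cdot,\sigma^2)$ Gaussians, so by Kakutani's dichotomy the mutual absolute continuity of their laws is equivalent to the convergence of
\[
\sum_{j\geq1}\left(\norm{z_{\phi(j)}}^\alpha-\norm{z_j}^\alpha\right)^2,
\]
where $\phi(j)$ is the original index of the $j$th undeleted element, satisfying $\phi(j)=j+k$ for all $j$ past the last deletion. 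Because the $\ell^1$ and $\ell^\infty$ spheres $\{z\in\Z^d:\norm{z}=n\}$ have cardinality $\Theta(n^{d-1})$, the summand vanishes unless $z_j$ and $z_{\phi(j)}$ lie in different spheres, which for large $n$ happens for at most $k$ indices $j$ per sphere. Each such term is of size $((n+1)^\alpha-n^\alpha)^2\asymp n^{2(\alpha-1)}$, so the total sum is comparable to $\sum_n n^{2(\alpha-1)}$, which converges iff $\alpha<1/2$.

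For the singularity bound when $\alpha > d/2$, apply Proposition \ref{prop: variance linear statistics} with $f_n(x)=f(x/n)$ for a smooth compactly supported bump $f$ with $f(0)=1$ (as in Remark \ref{rmk: scaling linear statistic}); continuity of $f$ at $0$ yields $f_n\to1$ uniformly on every compact set. Independence of $(g_z)_{z\in\Z^d}$ gives
\[
\mathrm{Var}\left(\sum_{z\in\Z^d}f_n(\norm{z}^\alpha+g_z)\right)=\sum_{z\in\Z^d}\mathrm{Var}\left(f_n(\norm{z}^\alpha+g_z)\right),
\]
and the Gaussian Poincar\'e inequality bounds each summand by $(\sigma^2/n^2)\,\E[f'((\norm{z}^\alpha+g_z)/n)^2]$. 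Since $f'$ is bounded and supported on a compact set, this is $\lesssim n^{-2}$ when $\norm{z}^\alpha\leq Cn$, and super-exponentially small in $\norm{z}^\alpha$ otherwise by Gaussian tail bounds. The lattice-point count $\#\{z\in\Z^d:\norm{z}^\alpha\leq Cn\}=\Theta(n^{d/\alpha})$ then yields a total variance $\lesssim n^{d/\alpha-2}$, which tends to $0$ as $n\to\infty$ precisely when $\alpha>d/2$.

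The principal technical point is the Poincar\'e step in the singularity half: the crude bound $\mathrm{Var}(f_n(\cdot))\leq\|f\|_\infty^2$ would be $O(1)$ for each of the $\Theta(n^{d/\alpha})$ points with $\norm{z}^\alpha$ on the scale of $n$, producing a divergent total. Exploiting the $1/n^2$ Lipschitz gain from the rescaling (equivalently, the fact that $f_n$ has Lipschitz constant $\|f'\|_\infty/n$) is what converts the crude estimate into the $n^{d/\alpha-2}$ decay needed to cross the rigidity threshold; the tolerance half, by contrast, is a direct shell-counting exercise once the enumeration is chosen to make $\norm{z_j}^\alpha$ constant within each sphere.
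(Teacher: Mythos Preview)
Your tolerance argument is essentially the paper's: enumerate $\Z^d$ by norm, reduce via Kakutani/Shepp to the $\ell^2$-summability of the mean shifts, and count how many indices per shell can have $\norm{z_{\phi(j)}}\neq\norm{z_j}$. One small imprecision: writing each nonzero term as $((n+1)^\alpha-n^\alpha)^2$ presumes the shift $j\mapsto j+k$ only crosses to the adjacent shell, which fails for $d=1$ when $k\geq3$ (shells have size $2$, so the jump can span $\lceil k/2\rceil$ shells). The fix is immediate---bound by $((n+k)^\alpha-n^\alpha)^2$ as the paper does---and the series still converges for $\alpha<1/2$.

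For singularity you take a genuinely different route. The paper chooses $f_n(x)=e^{-x/n}$, computes $\mathrm{Var}[e^{-(m^\alpha+g)/n}]$ exactly via the lognormal formula, and reduces to a Riemann sum $\sum_m m^{d-1}e^{-2m^\alpha/n}\sim n^{d/\alpha}\int_0^\infty x^{d-1}e^{-2x^\alpha}\,dx$, together with $(e^{\sigma^2/n^2}-1)\sim\sigma^2/n^2$. Your compactly supported bump plus the Gaussian Poincar\'e inequality sidesteps any explicit distributional computation: the $1/n$ Lipschitz constant of $f_n$ gives the $n^{-2}$ factor directly, and compact support localizes the main contribution to the $O(n^{d/\alpha})$ lattice points with $\norm{z}^\alpha\lesssim n$. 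Both land at $\mathrm{Var}\lesssim n^{d/\alpha-2}$. Your argument is cleaner and more robust (it extends verbatim to any noise satisfying a Poincar\'e inequality), whereas the paper's exponential test function has the advantage of a closed-form variance and global support, so no separate Gaussian-tail estimate for the far shells is required.
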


See Section \ref{Proof of Zd classical} for a proof.

\begin{remark}
One could also show that the process $\Pi$ in Theorem \ref{thm: Zd Classical} is deletion singular when $\alpha=d/2$ using Remark \ref{rmk: covariance trick} instead of Proposition \ref{prop: variance linear statistics},
but we do not make this precise in this paper.
\end{remark}

\begin{remark}
A similar sufficient condition for rigidity was proved when $\Pi$ is the eigenvalue point process of the Schr\"odinger operator
$-\Delta+\norm{z}^\alpha+g_z$ on $\Z^d$---see \cite[Theorem 3.16]{LamarreGhosalLiaoSpacialConditioning}; part of our motivation was to better understand
rigidity in these eigenvalue processes using \eqref{eqn: Zd model} as a toy model.
\end{remark}

\begin{remark}
\label{rmk: ellp norm 1}
In Theorem \ref{thm: Main}, if we instead assume that $\norm{\cdot}$ is the $\ell^p$ norm for some $p\in(1,\infty)$, then one can also prove that $\Pi$ is deletion singular if $\alpha\geq d/2$ using Proposition \ref{prop: variance linear statistics}. However, the sufficient condition for deletion tolerance one gets from Proposition \ref{prop: Kakutani} is much more complicated than $\alpha<1/2$. See Remark \ref{rmk: ellp norm 2} for more details.
\end{remark}

In sharp contrast to Theorems \ref{thm: Past 1} and \ref{thm: Past 2},
Theorem \ref{thm: Zd Classical} does not provide a characterization of deletion
tolerance and singularity for \eqref{eqn: Zd model} with i.i.d. Gaussian noise---namely, there is a gap in the characterization for $\alpha\in[1/2,d/2)$. In fact, it can be shown that
Propositions \ref{prop: Kakutani} and \ref{prop: variance linear statistics}/Remark \ref{rmk: covariance trick} cannot easily be used to
bridge this gap:

\begin{prop}
\label{prop: variance obstacle}
Let $\Pi$ be as in Theorem \ref{thm: Zd Classical}.
\begin{enumerate}
\item Recall the definitions of $P$ and $P_S$ in Proposition \ref{prop: Kakutani}. There exists an enumeration of $\mathbb Z^d=\{z_0,z_1,z_2,\ldots\}$
such that $P$ and $P_S$ are mutually absolutely continuous for every finite $S\subset\mathbb Z^d$ if and only if $\alpha<1/2$.
\item Let $f:\R\to\R$ be locally absolutely continuous, let $f_n$ be as in Remark \ref{rmk: scaling linear statistic}, and suppose there exists some nonempty interval $[a,b]\subset\R$ such that $\inf_{x\in[a,b]}|f'(x)|>0.$
Then, there exists some $C>0$ such that for every $n\geq1$,
\begin{align}
\label{Equation: linear statistics lower bound}
\limsup_{n\to\infty}\mathrm{Var}\left(\sum_{a\in\Pi}f_n(a)\right)\geq C\limsup_{n\to\infty}n^{d-2\alpha}.
\end{align}
In particular, this variance is unbounded whenever $\alpha<d/2$.
\end{enumerate}
\end{prop}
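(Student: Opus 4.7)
The plan is to first reduce the question to Kakutani's dichotomy. Since $(g_z)_{z\in\Z^d}$ are i.i.d.\ $\mathcal{N}(0,\sigma^2)$, for any enumeration $\Z^d=\{z_1,z_2,\ldots\}$ both $P$ and $P_S$ are infinite-product Gaussian measures with common variance; writing $i_1<i_2<\cdots$ for the indices surviving deletion of $S$, Kakutani's theorem gives
\[
P\sim P_S \quad\Longleftrightarrow\quad \sum_{k=1}^\infty \bigl(\norm{z_k}^\alpha - \norm{z_{i_k}}^\alpha\bigr)^2 < \infty.
\]
For the \emph{if} direction ($\alpha<1/2$), I would enumerate $\Z^d$ in non-decreasing order of $\norm{\cdot}$. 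Each norm level $\{z:\norm{z}=n\}$ has $\asymp n^{d-1}$ elements (for the $\ell^1$ or $\ell^\infty$ norm), so the sequence $\norm{z_k}^\alpha$ is piecewise constant in $k$ and only jumps at level boundaries, by $(n+1)^\alpha-n^\alpha\asymp n^{\alpha-1}$. Deleting $S$ shifts this sequence by at most $|S|$ positions beyond the largest removed index, so each level contributes at most $|S|$ nonzero squared differences of size $\asymp n^{2\alpha-2}$; the total is bounded by a multiple of $\sum_n n^{2\alpha-2}<\infty$ when $\alpha<1/2$.

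For the \emph{only if} direction, I would fix an arbitrary enumeration and take $S=\{z_1\}$, so that the Kakutani sum becomes $\sum_k(v_{k+1}-v_k)^2$ with $v_k:=\norm{z_k}^\alpha$. The plan is to establish the enumeration-independent lower bound
\[
\sum_{k=1}^\infty(v_{k+1}-v_k)^2 \;\geq\; \sum_{n=0}^\infty\bigl((n+1)^\alpha-n^\alpha\bigr)^2 \;\asymp\; \sum_n n^{2\alpha-2},
\]
which diverges for $\alpha\geq 1/2$ and so rules out Kakutani's criterion for every enumeration. The identity $(b-a)^2=\int\!\!\int\mathbf{1}_{\{s,t\in[a\wedge b,\,a\vee b]\}}\,ds\,dt$, summed over $k$ and with the order of summation and integration swapped, expresses the left-hand side as an integral of a count of consecutive pairs $(v_k,v_{k+1})$ whose interval contains both $s$ and $t$. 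Because no $v_k$ lies in any ``gap'' $(n^\alpha,(n+1)^\alpha)$ while every value $m^\alpha$ does appear in the sequence, at least one consecutive pair must jump across each such gap; integrating over the gap squares $(n^\alpha,(n+1)^\alpha)^2$ and summing over $n$ then yields the claimed bound. The main obstacle is making the ``at least one jump per gap'' step rigorous in the infinite setting, which I would handle by working with finite prefixes that contain all lattice points of norm $\leq N$ and letting $N\to\infty$.

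\textbf{Part 2.} By independence of $(g_z)$,
\[
\mathrm{Var}\Big(\sum_{a\in\Pi}f_n(a)\Big) = \sum_{z\in\Z^d}\mathrm{Var}\bigl(f_n(\norm{z}^\alpha+g_z)\bigr),
\]
so it suffices to bound each summand from below. Writing $\mu_z:=\norm{z}^\alpha/n$, I would apply the identity $\mathrm{Var}(F(Y))=\tfrac12\E[(F(Y_1)-F(Y_2))^2]$ with $F(y):=f(\mu_z+y)$ and $Y_1,Y_2$ i.i.d.\ $\mathcal{N}(0,\sigma^2/n^2)$. The hypothesis $\inf_{[a,b]}|f'|\geq c_0>0$ together with local absolute continuity of $f$ gives $|F(Y_1)-F(Y_2)|\geq c_0|Y_1-Y_2|$ on the event $\{\mu_z+Y_1,\mu_z+Y_2\in[a,b]\}$. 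For $z$ whose $\mu_z$ sits sufficiently deep inside $[a,b]$, this event has probability bounded below for large $n$, so $\mathrm{Var}(f_n(\norm{z}^\alpha+g_z))\gtrsim \sigma^2/n^2$. Summing over the lattice shell $\{z:\mu_z\in[a,b]\}$, whose cardinality is polynomial in $n$, produces the claimed polynomial lower bound on the total variance, with exponent positive precisely when $\alpha<d/2$. The main technical subtlety here is that $f$ is only locally absolutely continuous, so a pointwise Taylor expansion is not available; the two-copies variance identity circumvents this cleanly.
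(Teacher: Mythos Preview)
Your ``if'' direction is essentially the paper's argument: enumerate $\Z^d$ by non-decreasing norm, reduce via Kakutani/Shepp to the $\ell^2$ shift criterion, and observe that after deleting $|S|$ points each norm level contributes at most $|S|$ nonzero increments of size $\asymp n^{\alpha-1}$. For the ``only if'' direction you actually go further than the paper. The paper only checks that its chosen norm-ordered enumeration fails for $S=\{0\}$ when $\alpha\geq 1/2$; it never rules out other enumerations. Your gap-crossing argument---writing $(v_{k+1}-v_k)^2=\iint\mathbf 1_{\{s,t\in[v_k\wedge v_{k+1},\,v_k\vee v_{k+1}]\}}\,ds\,dt$, swapping, and using that every gap $(n^\alpha,(n+1)^\alpha)$ must be straddled by at least one consecutive pair---yields the enumeration-free lower bound $\sum_k(v_{k+1}-v_k)^2\geq\sum_n((n+1)^\alpha-n^\alpha)^2$, which settles the stronger reading of the statement. (Incidentally, the finite-prefix truncation is unnecessary: since both $0$ and $(n+1)^\alpha$ occur among the $v_k$, the last index between them with $v_k\leq n^\alpha$ gives the crossing directly.)

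\textbf{Part 2.} Your overall plan matches the paper's---bound each $\mathrm{Var}[f_n(\norm{z}^\alpha+g_z)]$ from below and sum over a shell---but your key pointwise step has a genuine gap. From $\inf_{[a,b]}|f'|\geq c_0$ you conclude $|f(u)-f(v)|\geq c_0|u-v|$ for $u,v\in[a,b]$. This would require $f'$ to have a.e.\ constant sign on $[a,b]$, which does \emph{not} follow for a merely locally absolutely continuous $f$: take $f'=\mathbf 1_C-\mathbf 1_{[a,b]\setminus C}$ where $C\subset[a,b]$ is measurable with $0<|C\cap I|<|I|$ for every subinterval $I$; then $|f'|=1$ a.e., yet $f$ is monotone on no subinterval and one easily finds $u\neq v$ with $f(u)=f(v)$. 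The paper avoids this by invoking an external variance lower bound rather than the two-copy identity. Your argument goes through verbatim if $f\in C^1$ (then $|f'|>0$ forces constant sign by the intermediate value theorem). For general absolutely continuous $f$, one workable repair is to use instead $\mathrm{Var}[f(\mu+Y)]\geq\tau^2\bigl(\E[f'(\mu+Y)]\bigr)^2$ (Gaussian integration by parts plus Cauchy--Schwarz), note that $\mu\mapsto\E[f'(\mu+Y)]=(f'*\phi_\tau)(\mu)$ is continuous with $\int_{a'}^{b'}\E[f'(\mu+Y)]\,d\mu=\E[f(b'+Y)-f(a'+Y)]\to f(b')-f(a')$ as $\tau\to 0$, and work on a subinterval $[a',b']$ with $|f(b')-f(a')|\geq(c_0/2)(b'-a')$; such a subinterval must exist, since otherwise $f$ would be $(c_0/2)$-Lipschitz, contradicting $|f'|\geq c_0$ a.e.
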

See Sections \ref{Proof of Variance Obstacle (1)} and \ref{Proof of Variance Obstacle (2)} for a proof. These considerations then lead us to the following general problem:

\begin{problem}
\label{Main Problem}
Which features of the point process $\Pi=\{V(z)+g_z\}_{z\in G}$ affect its rigidity?
\end{problem}

Given the limitations of Propositions \ref{prop: Kakutani} and \ref{prop: variance linear statistics}, new methods must be developed in order to address the above question. In addition to the obstacle posed by Proposition \ref{prop: variance obstacle}, both tools tend to be difficult to apply in cases where the noise has some complicated correlation structure. Kakutani's dichotomy requires independence to use and the calculations needed to bound the variance of test functions often become intractable when a complicated covariance structure is introduced. Previous works \cite{LamarreGhosalLiaoSpacialConditioning, LamarreGhosalLiaoSpectralRigidity} have found success using Proposition \ref{prop: variance linear statistics} in concert with the Feynman-Kac formula to give sufficient conditions for the number rigidity of various Schrödinger Eigenvalue Processes. Exponential test functions can be used to find tolerance thresholds for correlated eigenvalues in some cases, but these test function are not suitable for noise with heavy tailed distributions, and even for non-heavy tailed distributions the correlations must decay sufficiently quickly \cite{LamarreGhosalLiaoSpacialConditioning}.

\section{Main Result and Applications}

We now state our main result and some of its applications. We begin with some assumptions:

\begin{assumption}
\label{assumption: main}
Let $G$ be any countably infinite set. Suppose that $V:G\to\R$ satisfies the following conditions:
\begin{enumerate}[\quad a.]
\item The image $\mathrm{Im}(V)=\{x\in\R:V(z)=x\text{ for some }z\in G\}$
has no accumulation point.
\item For every $x\in \mathrm{Im}(V)$,
the set $\{z\in G:V(z)=x\}$
is finite.
\end{enumerate}
In particular, $V$ is unbounded.
Suppose that we denote
\begin{align}
\label{Equation: Image of V}
\mathrm{Im}(V)=\{t_0,t_1,t_2,\ldots\}\cup\{r_0,r_1,r_2,\ldots\},
\end{align}
where
$\cdots<t_2<t_1<t_0<0\leq r_0<r_1<r_2<\cdots$.
(Since $V$ is unbounded, at least one of the sets $\{t_0,t_1,t_2,\ldots\}$ or $\{r_0,r_1,r_2,\ldots\}$ must be infinite, but one of them will be finite if $V$ is bounded below or bounded above). We assume that $(g_z)_{z\in G}$ are random variables that satisfy the following:

\begin{enumerate}[\quad i.]
\item If $V$ is bounded below, then \begin{align}
\label{eqn: max limit 1}
\lim_{n\to\infty}\frac1{r_n-r_{n-1}}\max_{z\in G:V(z)\in[0,r_n]}|g_z|=0
\qquad\text{almost surely}.
\end{align}
\item If $V$ is bounded above, then
\begin{align}
\label{eqn: max limit 2}
\lim_{n\to\infty}\frac1{|t_n-t_{n-1}|}\max_{z\in G:V(z)\in[t_n,0)}|g_z|=0
\qquad\text{almost surely}.
\end{align}
\item If $V$ is neither bounded below nor bounded above, then both \eqref{eqn: max limit 1} and \eqref{eqn: max limit 2} hold.
\end{enumerate}
\end{assumption}

Our main result is as follows:

\begin{theorem}
\label{thm: Main}
Let $\Pi=\{V(z)+g_z\}_{z\in G}$.
If $G$, $V$, and $(g_z)_{z\in G}$
satisfy Assumption \ref{assumption: main},
then for every finite $S,T\subset G$ such that $|S|\neq|T|$, the processes
$\Pi_S$ and $\Pi_T$ are mutually singular.
\end{theorem}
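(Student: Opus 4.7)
The strategy is to construct a measurable functional $F$ on the space of locally finite point configurations on $\mathbb{R}$ such that $F(\Pi_S)=|S|$ almost surely for every finite $S\subset G$. Once such an $F$ is available, the level sets $\{F=|S|\}$ and $\{F=|T|\}$ are disjoint events of full measure under the laws of $\Pi_S$ and $\Pi_T$ respectively, giving mutual singularity immediately whenever $|S|\ne|T|$.

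The detector will take the form $F(\mu)=\lim_{n\to\infty}\bigl(K_n-|\mu\cap I_n|\bigr)$, where $(I_n)$ is a deterministic sequence of intervals and $K_n=|\{z\in G:V(z)\in J_n\}|$ is a deterministic count attached to a finite range $J_n\subset\mathrm{Im}(V)$. The key claim to establish is that, with a careful choice of $I_n$ and $J_n$, for $n$ large and almost surely,
\[
\{z\in G:V(z)+g_z\in I_n\}=\{z\in G:V(z)\in J_n\}.
\]
This forces $|\Pi\cap I_n|=K_n$; since $S$ is finite and hence eventually contained in $V^{-1}(J_n)$, also $|\Pi_S\cap I_n|=K_n-|S|$, yielding $F(\Pi_S)=|S|$ almost surely.

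In the one-sided cases of Assumption~\ref{assumption: main} the construction is straightforward: e.g., if $V$ is bounded below, take $I_n=(-\infty,c_n]$ with $c_n=(r_{n-1}+r_n)/2$ and $J_n=\mathrm{Im}(V)\cap(-\infty,r_{n-1}]$; fixing any $\epsilon<1/2$, assumption~(i) eventually forces $|g_z|<\epsilon(r_n-r_{n-1})$ for all $z$ with $V(z)\in[0,r_n]$, and an elementary case-split on whether $V(z)\le r_{n-1}$, $V(z)=r_n$, or $V(z)=r_k$ with $k>n$ yields the claimed set equality (the finitely many $z$ with $V(z)<0$ are handled trivially by $c_n\to\infty$). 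The main obstacle is case~(iii), where $|\Pi\cap(-\infty,c_n]|$ is almost surely infinite and a bounded window $I_n=(c_n^-,c_n^+]$ is required. Taking $c_n^+=(r_{n-1}+r_n)/2$ as before, one picks $c_n^-$ as the midpoint of a negative gap $(t_{m(n)},t_{m(n)-1})$ where the auxiliary index $m(n)\to\infty$ is chosen to satisfy two compatibility conditions simultaneously: $|c_n^-|$ must dominate the positive-side noise bound $\epsilon(r_n-r_{n-1})$ (so that $V(z)\ge 0$ points cannot drop below $c_n^-$), while the negative-side noise bound $\epsilon|t_{m(n)-1}-t_{m(n)-2}|$ obtained by applying (ii) at index $m(n)-1$ must remain smaller than $c_n^+$ (so that $V(z)<0$ points cannot be pushed above $c_n^+$). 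Verifying that such an $m(n)$ always exists, leveraging that both $|t_m|\to\infty$ and $c_n^+\to\infty$ provide ample slack under Assumption~\ref{assumption: main}(iii), is the technical heart of the proof; once the window is in place, the case analysis parallels the one-sided setting and gives $F(\Pi_S)=|S|$ almost surely.
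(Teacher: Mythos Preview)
Your approach is correct and takes a genuinely different, more elementary route than the paper's. The paper builds a functional
\[
f_k(A)=\inf_{|B|=k}\ \inf_{\psi:G\setminus B\to A\ \text{bij.}}\ \limsup_n \frac{1}{r_n-r_{n-1}}\max_{V(z)\in[0,r_n]}|\psi(z)-V(z)|,
\]
shows $f_{|S|}(\Pi_S)\le 0$ via the identity bijection, and establishes $f_k(\Pi_T)\ge 1$ for $k<|T|$ through a delicate ``chain of mismatches'' argument (producing an infinite sequence $u_0,u_1,\dots$ with $\psi(u_n)=V(u_{n+1})+g_{u_{n+1}}$, then extracting a monotone sub-subsequence). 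Because the inner infimum ranges over uncountably many bijections, the paper cannot verify Borel measurability of $f_k$ and must pass to the \emph{completions} of the laws of $\Pi_S,\Pi_T$, invoking universal measurability of analytic sets. Your counting functional $\mu\mapsto\liminf_n\bigl(K_n-\mu(I_n)\bigr)$ is manifestly Borel-measurable (use $\liminf$ rather than $\lim$ to make it globally defined), sidesteps the matching combinatorics entirely, and actually exhibits number rigidity along the windows $I_n$ rather than deletion singularity alone. The price is the two-sided case, where the existence of $m(n)$ is genuinely the crux; one explicit choice that works for any $\epsilon<\tfrac12$ is $m(n)=\max\{m:\epsilon|t_{m-1}|<c_n^+\}$, since then $|c^-_{m(n)}|\ge|t_{m(n)}|/2\ge c_n^+/(2\epsilon)\ge (r_n-r_{n-1})/(4\epsilon)>\epsilon(r_n-r_{n-1})$ gives your condition~(a), while $\epsilon|t_{m(n)-1}-t_{m(n)-2}|\le\epsilon|t_{m(n)-1}|<c_n^+$ gives~(b), and $c_n^+\to\infty$ forces $m(n)\to\infty$.
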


\begin{remark}
\label{rmk: measurability initial}
When we say that two point processes are mutually singular, we mean that the {\it completion} of the probability distributions of the point processes on the set of counting measures (equipped with the Borel $\sigma$-algebra generated by vague convergence) are mutually singular. See Section \ref{Section: Proof of Main Resilt - Setup} for a detailed statement, and
Remark \ref{measurability concern} and Lemma \ref{addressing measurability} for more details on the necessity of the probability measures we consider being complete.
\end{remark}

\begin{remark}
If we take $T=\varnothing$ (meaning that $\Pi_T=\Pi$), then Theorem \ref{thm: Main} implies that $\Pi$ is deletion singular under Assumption \ref{assumption: main}.
\end{remark}

We now discuss applications and the optimality of Theorem \ref{thm: Main}.

\subsection{Applications}

Our main application consists of the following improvement of Theorem \ref{thm: Zd Classical}:

\begin{corollary}
\label{cor: new Zd}
Let $d\geq1$. Let $(g_z)_{z\in\Z^d}$ be any Gaussian process such that $g_z$ has mean zero and variance $\sigma^2>0$ for every $z$ (i.e., we make no assumption on the covariance). Consider the point process $\Pi=\{\norm{z}^\alpha+g_z\}_{z\in\Z^d}$, where $\norm{\cdot}$ is either the $\ell^1$ or the $\ell^\infty$ norm. If $\alpha>1$, then $\Pi$ is deletion singular.
\end{corollary}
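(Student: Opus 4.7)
The plan is to invoke Theorem \ref{thm: Main} with $G = \Z^d$, $V(z) = \norm{z}^\alpha$, and $T = \varnothing$: once Assumption \ref{assumption: main} is verified, for any nonempty finite $S\subset\Z^d$ the processes $\Pi_S$ and $\Pi$ are mutually singular, which is exactly deletion singularity. So the work reduces entirely to checking the four conditions of Assumption \ref{assumption: main}.

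First I would verify the deterministic conditions (a) and (b). For both the $\ell^1$ and the $\ell^\infty$ norm on $\R^d$, one has $\norm{z}\in\N\cup\{0\}$ for every $z\in\Z^d$, so
\[\mathrm{Im}(V) = \{n^\alpha : n \in \N\cup\{0\}\},\]
which is a discrete subset of $[0,\infty)$ with no finite accumulation point. For each $n$, the preimage $\{z\in\Z^d:\norm{z}=n\}$ is the discrete sphere of radius $n$, which is finite. Because $V\geq 0$ and $V$ is unbounded, we can enumerate $\mathrm{Im}(V)$ via $r_n = n^\alpha$, there are no $t_j$'s, and condition (ii) is vacuous. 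Only (i) remains to be checked.

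For condition (i), the key ingredient is a maximum bound for Gaussians. The spacings satisfy $r_n - r_{n-1} = n^\alpha - (n-1)^\alpha \sim \alpha\, n^{\alpha - 1}$ as $n\to\infty$, and the set $\{z\in\Z^d:V(z)\in[0,r_n]\} = \{z:\norm{z}\leq n\}$ has cardinality $O(n^d)$. Since each $g_z$ is Gaussian with mean zero and variance $\sigma^2$, the tail bound $\P(|g_z|>t)\leq 2 e^{-t^2/(2\sigma^2)}$ holds, and a union bound (which uses only marginals, not the joint law) yields
\[\P\Big(\max_{z:\norm{z}\leq n}|g_z|>t\Big) \;\leq\; C\, n^d\, e^{-t^2/(2\sigma^2)}.\]
Choosing $t = K\sqrt{\log n}$ with $K$ sufficiently large makes the right-hand side summable in $n$, so Borel--Cantelli gives $\max_{z:\norm{z}\leq n}|g_z| = O(\sqrt{\log n})$ almost surely. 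Consequently,
\[\frac{\max_{z:V(z)\in[0,r_n]}|g_z|}{r_n - r_{n-1}} \;=\; O\!\Big(\frac{\sqrt{\log n}}{n^{\alpha-1}}\Big),\]
which tends to $0$ almost surely precisely when $\alpha > 1$. This verifies \eqref{eqn: max limit 1}, Assumption \ref{assumption: main} is satisfied, and Theorem \ref{thm: Main} delivers deletion singularity.

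I do not foresee a serious obstacle; the only point requiring some care is that the argument must be robust to arbitrary correlations between the $g_z$'s. The proposed approach handles this cleanly because the union bound controlling the maximum of $O(n^d)$ centered Gaussians depends only on the marginals. This is also what makes the resulting threshold $\alpha > 1$ independent of both the dimension $d$ and the covariance structure of $(g_z)_{z\in\Z^d}$, matching the dimension- and correlation-free improvement advertised in the abstract.
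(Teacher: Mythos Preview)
Your proposal is correct and follows essentially the same approach as the paper's proof: verify Assumption \ref{assumption: main} (a), (b) directly from the fact that $\norm{\cdot}$ takes nonnegative integer values, identify $r_n=n^\alpha$ so that $r_n-r_{n-1}\asymp n^{\alpha-1}$, and use a union bound plus Borel--Cantelli on the Gaussian tails to get $\max_{\norm{z}\le n}|g_z|=O(\sqrt{\log n})$ almost surely, which suffices for \eqref{eqn: max limit 1} when $\alpha>1$. Your remark that the union bound depends only on the marginals, making the argument covariance-free, is exactly the mechanism the paper highlights.
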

\begin{proof}
In this case, the function $V(z)=\norm{z}^\alpha$ satisfies Assumption \ref{assumption: main} a. and b. Moreover, the image of $V$ can be written as $\{r_0,r_1,r_2,\ldots\}$ with $r_n=n^\alpha$. By the intermediate value theorem, it is easy to see that $r_n-r_{n-1}\geq cn^{\alpha-1}$ for some constant $c>0$. By a standard union bound/Borel-Cantelli argument, the fact that the $g_z$ are Gaussian with mean zero and variance $\sigma^2$ implies that
\[\limsup_{n\to\infty}\frac{1}{\sqrt{\log|\{z\in\Z^d,~\norm{z}\leq n\}|}}\max_{z\in\Z^d,~\norm{z}\leq n}|g_z|<\infty
\qquad\text{almost surely}.\]
Given that 
\begin{align}
\label{Equation: Log n^d is Log n}
\log|\{z\in\Z^d,~\norm{z}\leq n\}|=\log\big(O(n^d)\big)=O(\log n),
\end{align}
we get that $(g_z)_{z\in\Z^d}$ satisfies Assumption \ref{assumption: main} i. whenever $\alpha>1$.
\end{proof}

There are two features of this result that are most significant:
\begin{enumerate}
\item In Theorem \ref{thm: Zd Classical}, the lower bound on $\alpha$ for deletion singularity grows with $d$. In sharp contrast to this, our main result implies that we can replace this by the dimension-independent lower bound $\alpha>1$; this improves on $\alpha\geq d/2$ as soon as $d\geq3$. This works because the dimensional-dependence in our argument occurs as a multiplicative factor in a logarithmic term (i.e., \eqref{Equation: Log n^d is Log n}). Conversely, in the variances of linear statistics (i.e., \eqref{Equation: linear statistics lower bound}), the dimension occurs as a power law on $n$. This mechanism is crucial in the design of the proof of Theorem \ref{thm: Main}; see Section \ref{Section: Outline} for more details.
\item Suppose that we replace the i.i.d. Gaussians in Theorem \ref{thm: Zd Classical} with a centered stationary Gaussian process with a covariance decay of the form $|\E[g_zg_w]|\lesssim\norm{z-w}^{-\beta}$
as $\norm{z-w}\to\infty$ for some $\beta>0$. Then, Proposition \ref{prop: variance linear statistics} implies deletion singularity whenever
\[\alpha>\begin{cases}
d/2&\text{if }\beta>d,\\
d-\beta/2&\text{if }\beta<d.
\end{cases}\]
(See, e.g., \cite[Theorem 3.16]{LamarreGhosalLiaoSpacialConditioning} for a similar result). Thus, the sufficient conditions for deletion tolerance that one can get through linear statistics conditions will typically depend significantly on the correlation within the noise. This is because nontrivial covariances can significantly increase the size of the variance of linear statistics, because of the non-diagonal terms in the sum \eqref{eqn: Linear stat variance to covariance}. Thus,
in addition to dimension-independence, our result allows to provide correlation-independent sufficient conditions for deletion singularity. Again, our method of proof (outlined in Section \ref{Section: Outline}) was consciously designed to make this improvement: In the proof of Corollary \ref{cor: new Zd}, we bound the maxima of $|g_z|$ using a union bound, which is completely insensitive to correlations.
\end{enumerate}

\begin{remark}
\label{Remark: alpha=1}
Despite these improvements, it is interesting to note that Corollary \ref{cor: new Zd} does not improve on every case in Theorem \ref{thm: Zd Classical}. For instance, when $d=1$, Theorem \ref{thm: Zd Classical} implies that $\{|z|^\alpha+g_z\}_{z\in\Z}$ with $g_z$'s i.i.d. Gaussians is deletion singular whenever $\alpha\in(1/2,1]$. However, Corollary \ref{cor: new Zd} is only able to prove this when $\alpha>1$. That being said, it can be shown that the mechanism that we use to prove Corollary \ref{cor: new Zd} cannot be improved to $\alpha\leq1$ in general (i.e., with no assumption on the correlation between the $g_z$'s). See Example \ref{example: shifted lattice} for a counterexample in this direction.
\end{remark}

\begin{remark}
\label{rmk: ellp norm 2}
Let $\norm{\cdot}$ be the $\ell^p$ norm for some $p\in(1,\infty)$. Let
$0=s_0<s_1<s_2<\cdots$
denote all the possible values of $\norm{z}^p$ for $z\in\mathbb Z^d$; in other words, all numbers that can be written as sums of $p^{\mathrm{th}}$ powers of $d$ nonnegative integers.
If we let $(g_z)_{z\in\Z^d}$ be i.i.d. Gaussians, then using Proposition \ref{prop: Kakutani}, one can prove that $\Pi=\{\|z\|^\alpha+g_z\}_{z\in}$
is deletion tolerant whenever
$$\sum_{n=1}^\infty(s_n^{\alpha/p}-s_{n-1}^{\alpha/p})^2<\infty.$$
Conversely, as mentioned in Remark \ref{rmk: ellp norm 1}, Proposition \ref{prop: variance linear statistics} implies that $\Pi=\{\|z\|^\alpha+g_z\}_{z\in}$ is deletion singular whenever $\alpha\geq d/2$.

In this context,
it is natural to wonder whether one could improve this result in a way that is analogous to how
Corollary \ref{cor: new Zd} 
improves Theorem \ref{thm: Zd Classical}. If we replicate the argument we used in Corollary \ref{cor: new Zd}, then we get the following: If $(g_z)_{z\in\Z^d}$ is an arbitrary centered Gaussian process with $\E[g_z^2]=\sigma^2>0$ for all $z$, and $\norm{\cdot}$ is the $\ell^p$ norm, then $\Pi$ is deletion singular whenever
\begin{align}
\label{eqn: rigidity condition for general ell_p}
\limsup_{n\to\infty}\frac{\sqrt{\log n}}{s_n^{\alpha/p}-s_{n-1}^{\alpha/p}}=0.
\end{align}

The difficulty in interpreting these conditions is that, for general $p\in(1,\infty)$, calculating the possible outputs of $\norm{\cdot}^p$
is a very difficult problem.
This is true even for integer values of $p$, in which case calculating the $s_n$'s has connections to deep problems in number theory (see, e.g., the survey paper \cite{Waring}).
Nevertheless, it can be shown that, similarly to Corollary \ref{cor: new Zd}, the condition \eqref{eqn: rigidity condition for general ell_p} can provide dimension-independent criteria for the deletion singularity of $\Pi$, and thus improve on the condition $\alpha>d/2$. More specifically, it is known that for every $p\in(1,\infty)\cap\N$, there exists some critical dimension $d_\star\in\N$ large enough so that if $d\geq d_\star$, then $s_n=n$ for all $n\geq0$ (this is a classical result due to Hilbert \cite{Hilbert}). In that case, \eqref{eqn: rigidity condition for general ell_p} simplifies to
\begin{align}
\label{eqn: rigidity condition for general ell_p 2}
\limsup_{n\to\infty}\frac{\sqrt{\log n}}{n^{\alpha/p}-(n-1)^{\alpha/p}}=0;
\end{align}
\eqref{eqn: rigidity condition for general ell_p 2}  holds whenever $\alpha>p$.
\end{remark}

\subsection{Optimality}

In this section, we discuss the optimality of various conditions in Theorem \ref{thm: Main}. We start by giving an example to show that it is in fact necessary to assume $|S|\neq |T|$.

\begin{example}
\label{Ex: |S|=|T|}
Consider the point process $\Pi=\{\norm{z}^\alpha+g_z\}_{z\in\Z^d}$, where $\norm{\cdot}$ is the $\ell^1$ or $\ell^\infty$ norm and $(g_z)_{z\in \Z^d}$ are i.i.d gaussian with mean 0 and variance $\sigma^2$. Let $S,T \subset \Z^d$ be finite and such that $|S|=|T|$. Then for all $
\alpha>0$, $\Pi_S$ and $\Pi_T$ are mutually absolutely continuous. 
\end{example}

This implies that even in the best circumstances, we can only detect how many points were deleted from $\Pi$, not which specific points were deleted;
see Section \ref{Proof of S=T} for a proof.

Recall Assumption \ref{assumption: main} iii. We show by giving an example that the double sided condition in the case where $V$ is unbounded in both directions is necessary. Intuitively we can think of a process failing to fulfill either side of the condition as being able to ``hide" the deletion in the side of the process where the noise effects are too strong to be able to find deletions. 
\begin{example}
\label{Ex: double-sided counterexample}
Consider the point process on the real line \[\Pi=\{-z^2+g_z:z\in\Z^-\}\bigcup\{z^\alpha+g_z:z\in\Z^+\cup\{0\}\}\] where $(g_z)_{z\in\Z}$ are i.i.d gaussian with mean $0$ and variance $1$, and $\alpha <\frac{1}{2}$. Then, $\Pi$ and $\Pi_{\{0\}}$ are mutually absolutely continuous.
\end{example}

See Section \ref{Proof of doublesided} for a proof.

\noindent Finally, following-up on Remark \ref{Remark: alpha=1}, we show that Theorem \ref{thm: Main} cannot be used to prove the deletion singularity of processes at a level of generality similar to Corollary \ref{cor: new Zd} (i.e., with general covariances) below the threshold $\alpha=1$:

\begin{example}
\label{example: shifted lattice}
Consider $\Pi=\{z^\alpha+g\}_{z\in\N}$, where $g$ is any random variable such that $g$ and $g+k$ are mutually absolutely continuous for any fixed $k\in\Z$.
\begin{enumerate}
\item If $\alpha=1$, then $\Pi_{\{1,\ldots,m\}}$ and $\Pi_{\{1,\ldots,n\}}$ are mutually absolutely continuous for every  $m,n\geq1$. In particular, $\Pi$ is not deletion singular.
\item If $\alpha\in(0,1)\cup(1,\infty)$, then
$\Pi$ is deletion singular.
\end{enumerate}
\end{example}

See Section \ref{Proof of shifted lattice} for proof.

\section{Outline of Proof}
\label{Section: Outline}

We will only outline the proof of Theorem \ref{thm: Main} in the case where $(g_z)_{z\in G}$ satisfies Assumption \ref{assumption: main} i., as the other cases follow similarly.

Let $S,T\subset G$ be finite and such that $|S|\neq|T|$; we can
assume without loss of generality that $|S|<|T|$.
We want to find some function $f$ such that, almost surely, $f(\Pi_S)=0$
and $f(\Pi_T)\neq0$. To achieve this, consider the following class of functions: Given an integer $k\geq0$ and any countable set $A$ (e.g., $A=\Pi_S$ or $A=\Pi_T$), let

\begin{align}
\label{eqn: Main Functional}
f_k(A) = \inf\limits_{\substack{B\subset G\\|B|=k}}
~\inf\limits_{\substack{\psi:G\setminus{B}\to A\\\text{bijection}}}
~\limsup\limits_{n\to\infty}\frac{1}{r_n-r_{n-1}}
~\max_{\substack{z\in G\setminus B\\V(z)\in[0,r_n]}}|\psi(z)-V(z)|.
\end{align}
The rationale for
the design of the latter is as follows:

\begin{enumerate}
\item We make an apriori guess, $k\in\N$, that represents how many points we believe are missing from either $\Pi_S$ or $\Pi_T$; i.e., we are trying to guess the cardinalities of $S$ and $T$.

\item Once we have made this guess, we try to guess which $k$ points were deleted from $\Pi$ in $\Pi_S$ or $\Pi_T$. In the definition of $f_k$, this corresponds to choosing a subset $B\subset G$ with $|B|=k$, which matches the deleted points.

\item On the one hand, if $k=|S|$, then the infimum over all $B$ such that $|B|=k$ in $f_k$ will eventually find $B=S$. If $B=S$, then the infimum over $\psi:G\setminus S\to\Pi_S$ will include the ``identity" map
\[\psi(z)=V(z)+g_z,\qquad z\in G\setminus S.\]
In that case, we have the almost-sure limit
\[\limsup\limits_{n\to\infty}\frac{1}{r_n-r_{n-1}}
~\max_{\substack{z\in G\setminus B\\V(z)\in[0,r_n]}}|\psi(z)-V(z)|=\limsup\limits_{n\to\infty}\frac{1}{r_n-r_{n-1}}
~\max_{\substack{z\in G\setminus B\\V(z)\in[0,r_n]}}|g_z|=0\]
by Assumption \ref{assumption: main} i.
Thus, we conclude that $f_k(\Pi_S)=0$ almost surely.

\item On the other hand, if $|T|>k$, we show that no matter how we choose $B$ and $\psi$, we are forced to have an infinite chain of ``mismatches" between the points missing from $\Pi_T$ and our guess $B$, i.e., points $z$ such that $\psi(z)\neq V(z)+g_z$. Using this infinite chain of mismatches, we will be able to show that $f_k(\Pi_T)\geq1$ almost surely.
\end{enumerate}

In summary, if $|S|\neq|T|$, then evaluating $f_k(\Pi_S)$ and $f_k(\Pi_T)$ for $k=0,1,2,\ldots$ eventually yields some $f$ such that $f(\Pi_S)=0\neq f(\Pi_T)$ almost surely.

\begin{remark} 
\label{k always less than or equal}
In the above outline, we do not describe what happens when we evaluate $f_k$ in $\Pi_S$ for $|S|<k$.
However, by starting at $k=0$ and iterating upwards, we are able to avoid dealing with this situation altogether.
\end{remark}

\begin{remark}
\label{measurability concern}
By virtue of including an infimum over an uncountable set, we are unable to show that the functionals $f_k$ are measurable. Thus, the fact that
some of these functionals output different values for $\Pi_S$ and $\Pi_T$ does not necessarily imply
that the point processes are mutually singular. That being said, we are able
to get around this issue by completing
the distributions of the point processes;
see Lemma \ref{addressing measurability} for the details.
\end{remark}

This approach is partially inspired by the one used to prove Theorem 1.3 in \cite{PeresSly} (see Section 2.1 therein). In that paper, the authors characterize the tolerance and singularity of the perturbed lattice $\Pi=\{x+Y_x\}_{x\in\Z^d}$ where $Y_x\sim N_d(0,\sigma^2I)$ are independent $d$-dimensional normal random vectors with independent components of variance $\sigma^2$. Both our approach and the one in that paper rely on the following two-step strategy:
\begin{enumerate}
\item attempting to match points in a perturbed-lattice-type model to a set of ``unperturbed" deterministic lattice points, and then
\item examining the fluctuations between random points and their matched deterministic lattice points in order to determine if points were deleted.
\end{enumerate}

That being said, the mechanism that we exploit in step 2 of this strategy is entirely different from the one used in \cite{PeresSly}. On the one hand, our strategy relies on comparing the maximum of these mismatches with the growth rate of $V$'s possible outputs (as evidenced by the design of $f_k$ in \eqref{eqn: Main Functional}). \cite{PeresSly} on the other hand, uses an averaging technique to check for deletions. The authors show that if the variance of the perturbations is small enough they can distinguish between the average deviations of the points in the deleted lattice and those of the original lattice along a particular path. Despite similarity in philosophy, the differences in the approach combined with the different setting result in some additional flexibility. In particular, we do not require independence of the noise since we do not rely on greedy lattice animals (or similar mechanisms) and, crucially for our setting, we are also able to have multiple points in $\Pi$ that originate from the same $z\in G$ (i.e., the function $V(z)$ need not be injective). Furthermore, our approach allows us to not only distinguish between the original process $\Pi$ and some deleted process $\Pi_S$, but also between two different deleted processes $\Pi_S$ and $\Pi_T$ with $|S|\neq|T|$, resulting in a mechanism that can tell us precisely how many points were deleted.

\section{Proof of Main Result}

\subsection{General Setup}
\label{Section: Proof of Main Resilt - Setup}

Let $G=\{z_1,z_2,z_3,\ldots\}$ be an arbitrary enumeration of $G$.
Let $\R^\N=\{\boldsymbol\omega=(\omega_i)_{i\in\N}:\omega_i\in\R\}$ be the set of real-valued sequences, equipped with the product topology
generated by the Euclidean topology on $\R$. Let $\mathcal B(\R^\N)$ be the Borel $\sigma$-algebra generated by this topology. By virtue of being a countable product of Polish spaces, $\R^\Z$ is a Polish space.
On this space, we define the set of random variables $(g_{z_i})_{i\in\N}$
as $g_{z_i}(\boldsymbol\omega)=\omega_i$ for $i\in\N$; we then equip $(\R^\N,\mathcal B(\R^\N))$ with the probability measure $\P$  that corresponds to the joint distribution of the $g_z$'s.
Let $\mathcal L\subset\R^\N$ denote the set of sequences such that the set $|\{i\in\N:V(z_i)+\omega_i\in K\}|$  is finite for every compact $K\subset\mathbb R$. It is a simple exercise to show that $\mathcal L\in\mathcal B(\R^\N)$ (e.g., \cite[Appendix B]{LiREU});
as mentioned in the introduction, we assume that $\P[\mathcal L]=1$.

Let $\mathbb M$ be the space of locally finite Borel measures on $\R$, and let $\mathcal B(\mathbb M)$ be the Borel $\sigma$-algebra on that space generated by the vague topology (e.g., \cite[Page 109]{RandomMeasureTheoryandApplications}). Under this topology, $\mathbb M$ is also a Polish space (e.g., \cite[Theorem 4.2]{RandomMeasureTheoryandApplications}).
For every finite $S\subset G$, we define the map $\Pi_S:\R^\N\to\mathbb M$ as
\[\Pi_S(\boldsymbol \omega)=\left(\sum_{i\in\N:~z_i\not\in S}\delta_{V(z_i)+\omega_i}\right)\mathbf 1_{\{\boldsymbol\omega\in\mathcal L\}}
=\left(\sum_{i\in\N:~z_i\not\in S}\delta_{V(z_i)+g_{z_i}(\boldsymbol\omega)}\right)\mathbf 1_{\{\boldsymbol\omega\in\mathcal L\}},\]
where $\delta_x$ denotes the Dirac mass at $x\in\R$.
(This includes the case where $S=\varnothing$, which we simply denote $\Pi=\Pi_\varnothing$).
It is a straightforward exercise to show that $\Pi_S$ is measurable with respect to $\mathcal B(\R^\N)$ and $\mathcal B(\mathbb M)$ (e.g., \cite[Appendix B]{LiREU}).

Let $\mathbb P_{\Pi_S}=\P\circ\Pi_S^{-1}$ denote the probability distribution of $\Pi_S$ on $(\mathbb M,\mathcal B(\mathbb M))$, and let $\overline{\P}^0_{\Pi_S}$ be its completion, denoting the corresponding complete $\sigma$-algebra as $\mathcal{F}_{\Pi_S}$. Finally, to ensure that our distributions are operating over the same $\sigma$-algebra, we define \begin{align}
\label{equation: completion}
\mathcal{F}=\bigcap\limits_{S\subset G:|S|<\infty}\mathcal{F}_{\Pi_S},\end{align}
and we let $\overline{\P}_{\Pi_S}$ be the restriction of $\overline{\P}^0_{\Pi_S}$ on $\mathcal F$.

Following-up on Remark \ref{rmk: measurability initial},
our aim is to show that $\overline{\P}_{\Pi_S}$ and $\overline{\P}_{\Pi_T}$ are mutually singular in the space $(\mathbb M,\mathcal F)$ when $|S|\neq|T|$ are both finite.
Our proof of this consists of constructing disjoint measurable sets $A_S,A_T\in\mathcal F$ such that
$\overline\P_S[A_S]=1$ and $\overline\P_T[A_T]=1$. We now go through a case-by-case proof of this.

\subsection{Case 1. $V$ is Bounded Below.}

 For the sake of clarity we break the proof up into several lemmas that when combined give the desired result. We prove the theorem under (\ref{eqn: max limit 1}) and then describe in Section \ref{remaining cases} how the other two cases follow similarly.
For the rest of this section we can assume without loss of generality that $|S|<|T|$.
\begin{lemma}
\label{lem: f_k(Pi_S)=0 proof}
Let $f_k$ be defined as in \eqref{eqn: Main Functional}.
If $\boldsymbol\omega\in\mathcal L$, then
\[f_{|S|}(\Pi_S(\boldsymbol\omega))\leq\limsup\limits_{n\to\infty}\frac{1}{r_n-r_{n-1}}\max_{z\in G:V(z)\in[0,r_n]}|g_z(\boldsymbol\omega)|.\]
\end{lemma}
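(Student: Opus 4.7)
The plan is to upper-bound the double infimum in the definition of $f_{|S|}$ by exhibiting one explicit admissible pair $(B,\psi)$ and computing the resulting expression. Since $f_{|S|}$ is defined as an infimum, any concrete feasible choice automatically yields an upper bound, so the whole lemma reduces to producing a ``canonical'' witness for which $|\psi(z)-V(z)|$ is exactly $|g_z(\boldsymbol\omega)|$.

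The witness I would use is the ``identity'' one suggested in the outline of the proof (item~3 in Section~\ref{Section: Outline}). First, take $B=S$, which is feasible because $|B|=|S|=k$. Next, define $\psi:G\setminus S\to\Pi_S(\boldsymbol\omega)$ by
\[
\psi(z)\;=\;V(z)+g_z(\boldsymbol\omega),\qquad z\in G\setminus S.
\]
Viewing $\Pi_S(\boldsymbol\omega)$ as the multiset obtained from the counting measure $\sum_{z\notin S}\delta_{V(z)+g_z(\boldsymbol\omega)}$ (i.e., an indexed family rather than a set of distinct reals), this $\psi$ is a bijection by construction, since $\Pi_S(\boldsymbol\omega)$ is literally the image of $G\setminus S$ under $\psi$ taken with multiplicity. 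This is the only slightly delicate point and is exactly why the authors' framework accommodates non-injective $V$ (cf.\ their discussion contrasting their setting with \cite{PeresSly}); I would verify it explicitly by unfolding the definition of $\Pi_S$ from Section~\ref{Section: Proof of Main Resilt - Setup}.

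Once $(B,\psi)$ is in hand, I would simply plug in. By the definition of $\psi$, $|\psi(z)-V(z)|=|g_z(\boldsymbol\omega)|$, so
\[
\limsup_{n\to\infty}\frac{1}{r_n-r_{n-1}}\max_{\substack{z\in G\setminus S\\ V(z)\in[0,r_n]}}|\psi(z)-V(z)|
\;=\;\limsup_{n\to\infty}\frac{1}{r_n-r_{n-1}}\max_{\substack{z\in G\setminus S\\ V(z)\in[0,r_n]}}|g_z(\boldsymbol\omega)|.
\]
Because $G\setminus S\subset G$, the max on the right is bounded above by the same max taken over all $z\in G$ with $V(z)\in[0,r_n]$. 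Since the infimum defining $f_{|S|}(\Pi_S(\boldsymbol\omega))$ is at most this particular witness's value, the chain of inequalities gives precisely the statement of the lemma.

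The main (and only) obstacle is the bijectivity subtlety in the middle paragraph; once it is resolved by the multiset interpretation of $\Pi_S(\boldsymbol\omega)$, the rest is a one-line substitution and a trivial monotonicity of the maximum over a larger index set. I would therefore aim to keep the formal write-up short, with the bulk of the care devoted to justifying that $\psi(z)=V(z)+g_z(\boldsymbol\omega)$ qualifies as a bijection in the sense used in the definition of $f_k$.
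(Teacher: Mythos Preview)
Your proposal is correct and follows essentially the same approach as the paper: choose the witness $B=S$ and $\psi(z)=V(z)+g_z(\boldsymbol\omega)$, then bound the infimum by this specific value and enlarge the index set of the maximum from $G\setminus S$ to $G$. Your additional care about the multiset interpretation of $\Pi_S(\boldsymbol\omega)$ to justify that $\psi$ is a bijection is a welcome clarification that the paper leaves implicit.
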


\begin{proof}
If $k=|S|$, then the function $f_k$ is able to make the correct guess for every point in $\Pi_S(\boldsymbol\omega)$ through the subset $B=S$ and the bijection $\psi':G\setminus S \to \Pi_S(\boldsymbol\omega)$ such that $\psi'(z) = V(z)+g_z(\boldsymbol\omega)$. Thus by \eqref{eqn: Main Functional},

\begin{align*}
f_k(\Pi_S(\boldsymbol\omega)) &\leq\inf\limits_{\psi:G\setminus{S}\to \Pi_S(\boldsymbol\omega)}\limsup\limits_{n\to\infty}\frac{1}{r_n-r_{n-1}}\max_{\substack{z\in G\setminus B\\V(z)\in[0,r_n]}}|\psi(z)-V(z)|\\ &\leq \limsup\limits_{n\to\infty}\frac{1}{r_n-r_{n-1}}\max_{\substack{z\in G\setminus B\\V(z)\in[0,r_n]}}|\psi'(z)-V(z)| \\&\leq\limsup\limits_{n\to\infty}\frac{1}{r_n-r_{n-1}}\max_{z\in G:V(z)\in[0,r_n]}|g_z(\boldsymbol\omega)|,
\end{align*}
as desired.
\end{proof}

\begin{lemma}
\label{lem: f_k(Pi_T)>1 proof}
Suppose that $V$ is bounded below, and that it satisfies Assumption \ref{assumption: main} a. and b. If $\boldsymbol\omega\in\mathcal L$, then
for every $0\leq k<|T|$, the function $f_k$ defined in \eqref{eqn: Main Functional} satisfies
\[f_k(\Pi_T(\boldsymbol\omega))\geq1-\limsup\limits_{n\to\infty}\frac{1}{r_n-r_{n-1}}\max_{z\in G:V(z)\in[0,r_n]}|g_z(\boldsymbol\omega)|.\]
\end{lemma}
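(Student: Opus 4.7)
The plan is to exploit the pigeonhole gap $|T|-k\geq 1$ by extracting, from any bijection in the infimum defining $f_k$, an infinite forward orbit whose ``upward jumps'' force the normalized displacement to exceed $1-M_n/(r_n-r_{n-1})$ infinitely often; here $M_n:=\max_{z\in G:V(z)\in[0,r_n]}|g_z(\boldsymbol\omega)|$.

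First, I fix an arbitrary $B\subset G$ with $|B|=k$ and a bijection $\psi:G\setminus B\to\Pi_T(\boldsymbol\omega)$. Since $\Pi_T(\boldsymbol\omega)=\sum_{w\in G\setminus T}\delta_{V(w)+g_w(\boldsymbol\omega)}$ is naturally indexed by $G\setminus T$, $\psi$ lifts to a set-bijection $\phi:G\setminus B\to G\setminus T$ satisfying $\psi=(V+g)\circ\phi$. Because $|B|<|T|$, $T\setminus B$ is nonempty, so I may initiate an orbit at some $w_0\in T\setminus B$ and iterate $w_{i+1}=\phi(w_i)$ as long as $w_i\in G\setminus B$. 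Each orbit is injective (by injectivity of $\phi$) and can only terminate by landing in $B\setminus T$. Comparing the $|T\setminus B|$ possible starting points to the at most $|B\setminus T|$ possible terminations leaves at least $|T|-k\geq 1$ orbits infinite; I fix one such orbit $(w_i)_{i\geq 0}$.

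Next, each level set $V^{-1}(r_l)$ is finite by Assumption \ref{assumption: main} b., and $r_l\to\infty$ by Assumption \ref{assumption: main} a.\ together with the unboundedness of $V$; distinctness of the $w_i$ then forces $V(w_i)\to\infty$. In particular, discarding a finite prefix along the chain, we may assume that all $V(w_i)\geq 0$ and that infinitely many indices $i$ are ``upward jumps'' $V(w_{i+1})>V(w_i)$. For each such $i$, define $n_i$ via $r_{n_i}=V(w_{i+1})$, so that $V(w_i)\leq r_{n_i-1}$ and $|g_{w_{i+1}}|\leq M_{n_i}$. The triangle inequality yields
\[|\psi(w_i)-V(w_i)|\geq V(w_{i+1})-V(w_i)-|g_{w_{i+1}}|\geq (r_{n_i}-r_{n_i-1})-M_{n_i}.\]
Since $w_i\in G\setminus B$ and $V(w_i)\in[0,r_{n_i}]$, dividing by $r_{n_i}-r_{n_i-1}$ lower-bounds the normalized maximum at $n=n_i$ by $1-M_{n_i}/(r_{n_i}-r_{n_i-1})$. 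Passing to $\limsup$ along the infinite subsequence $n_i\to\infty$, and then taking the infimum over $B$ and $\psi$, delivers the claim.

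The main obstacle is the chain-construction step: verifying that some forward orbit is truly infinite is where the hypothesis $|B|<|T|$ is consumed, and it requires the simultaneous use of injectivity of $\phi$, finiteness of the level sets of $V$ (Assumption \ref{assumption: main} b.), and discreteness of $\mathrm{Im}(V)$ (Assumption \ref{assumption: main} a.) to promote the algebraic excess $|T|-k\geq 1$ into infinitely many analytic events that survive the normalization by $r_n-r_{n-1}$.
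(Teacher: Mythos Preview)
Your proof is correct and follows essentially the same strategy as the paper's: build an infinite chain of mismatches from the cardinality excess $|T|>|B|$, then exploit upward jumps along the chain to lower-bound the normalized displacement. Your execution is in fact somewhat more economical than the paper's in two places: (i) you lift $\psi$ to a bijection $\phi:G\setminus B\to G\setminus T$ and use a clean pigeonhole count ($|T\setminus B|$ orbit starts versus $|B\setminus T|$ possible terminations) to locate an infinite orbit, whereas the paper runs an explicit ``restart'' algorithm; (ii) you only extract the upward-jump indices $V(w_{i+1})>V(w_i)$ and rely on $V(w_i)\to\infty$ to send $n_i\to\infty$, whereas the paper additionally arranges the targets $V(u_{n_k+1})$ to be strictly increasing---a condition that, as your argument shows, is not actually needed for the final $\limsup$ bound.
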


\begin{proof}
Let $0\leq k<|T|$.
By definition of $f_k$ in \eqref{eqn: Main Functional}, it suffices to prove that for every choice of subset $B\subset G$ such that $|B|=k$ and bijection 
$\psi:G\setminus B\to\Pi_T(\boldsymbol\omega)$, one has
\begin{align}
\label{Equation: fk lower bound}
\limsup\limits_{n\to\infty}\frac{1}{r_n-r_{n-1}}\max_{\substack{z\in G\setminus B\\V(z)\in[0,r_n]}}|\psi(z)-V(z)|
\geq1-\limsup\limits_{n\to\infty}\frac{1}{r_n-r_{n-1}}\max_{z\in G:V(z)\in[0,r_n]}|g_z(\boldsymbol\omega)|.
\end{align}
Thus, for the remainder of this proof,
we assume that $B$ and $\psi$ are a fixed choice of such a subset and bijection.
The first ingredient in our proof of \eqref{Equation: fk lower bound} is to show that there exists an infinite sequence of distinct elements $u_0,u_1,u_2,\ldots\in G\setminus B$
that form a chain of mismatches, in the sense that $\psi(u_n)=V(u_{n+1})+g_{u_{n+1}}(\boldsymbol\omega)$ for all $n\geq0$.
One can generate this sequence using the following algorithm (see Figure \ref{Figure: Algorithm} in the appendix for an illustration):

Since $|T|>k=|B|$, there exists $u_0\in T\setminus B$. Since $V(u_0)+g_{u_0}(\boldsymbol\omega)\not\in\Pi_T(\boldsymbol\omega)$, this means that there exists some $u_1\in G\setminus T$ such that $u_1\neq u_0$ and $\psi(u_0)=V(u_1)+g_{u_1}(\boldsymbol\omega)$.
If $u_1\in G\setminus B$, then the fact that $\psi$ is a bijection means that there exists $u_2\in G\setminus T$ such that $u_2\not\in\{u_0,u_1\}$ and $\psi(u_1)=V(u_2)+g_{u_2}(\boldsymbol\omega)$.
With this in hand, we may now attempt to generate $u_n$ for $n\geq3$ using the following recursive procedure: For every $n\geq3$, if $u_{n-1}\in G\setminus B$, then use the fact that $\psi$ is a bijection to conclude that there exists $u_n\in G\setminus T$ such that $u_n\not\in\{u_0,\ldots,u_{n-1}\}$ and $\psi(u_{n-1})=V(u_n)+g_{u_n}(\boldsymbol\omega)$.

The only mechanism that could prevent this algorithm from generating an infinite sequence would be the existence of some $n_\star\geq1$ such that $u_{n_\star}\in B$. In such a case, the recursive step fails because $\psi(u_{n_\star})$ is not defined.
However, if that happens, then we can restart the algorithm by choosing a new initial point $u_0'\in T\setminus B$ that is different from $\{u_0,u_1,\ldots,u_{n_\star}\}$.
It is always possible to choose such a $u_0'$ because the algorithm ensures that $u_0\in T\setminus B$, $u_1,\ldots,u_{n_\star-1}\in G\setminus (T\cup B)$, and $u_{n_\star}\in B\setminus T$. 
In particular, the sequence $u_0,\ldots,u_{n_\star}$ contains one element from $T$ and one distinct element from $B$; hence there will be at least one element left in $T\setminus B$ because $|T|>|B|$.
Since $\psi$ is a bijection, the new sequence of mismatches $\psi(u_{n-1}')=V(u_n')+g_{{u_n'}}(\boldsymbol\omega)$ recursively generated from $u_0'$ will not contain any element in $\{u_0,\ldots,u_{n_\star}\}$.
In conclusion, since $|T|>|B|$, we can repeatedly restart the algorithm with a new initial point in $T\setminus B$ until we either generate an infinite sequence, or until we exhaust all the points in $B$ (in which case the next starting point is guaranteed to generate an infinite sequence since there are no more points in $B$ that can terminate it).

For the remainder of this proof, we thus assume that $u_0,u_1,u_2,\ldots\in G\setminus B$ are distinct and such that $\psi(u_n)=V(u_{n+1})+g_{u_{n+1}}(\boldsymbol\omega)$ for $n\geq0$. The second ingredient in our proof is to extract a subsequence $0\leq n_0<n_1<n_2<\cdots$ that satisfies the following properties:
\begin{align}
\label{Equation: Lower Bound Subsequence 1}
0\leq V(u_{n_0+1})<V(u_{n_1+1})<V(u_{n_2+1})<\cdots,
\end{align}
and
\begin{align}
\label{Equation: Lower Bound Subsequence 2}
0\leq V(u_{n_k})<V(u_{n_k+1})\qquad\text{for every }k\geq0.
\end{align}
Before we show that such a subsequence exists, we explain how it can be used to prove \eqref{Equation: fk lower bound}, and thus conclude the proof of Lemma \ref{lem: f_k(Pi_T)>1 proof}:
Let us denote $s_0,s_1,\cdots$ and $h_0,h_1,\ldots$ such that
\[r_{s_k}=V(u_{n_k+1})
\qquad\text{and}\qquad
r_{h_k}=V(u_{n_k}),\qquad k\geq0;\]
by \eqref{Equation: Lower Bound Subsequence 1} and \eqref{Equation: Lower Bound Subsequence 2}, 
$s_k<s_{k+1}$ and
$h_k<s_k$ for $k\geq0$.
Given this, we note that
\[\text{left-hand side of }\eqref{Equation: fk lower bound}
\geq
\limsup\limits_{k\to\infty}\frac{1}{r_{s_k}-r_{s_k-1}}\max_{\substack{z\in G\setminus B\\V(z)\in[0,r_{s_k}]}}|\psi(z)-V(z)|,\]
where we get the lower bound by taking the $\limsup$ along the subsequence $\{s_k\}_{k\geq1}$.
If we lower bound the maximum above by making the choice $z=u_{n_k}$ (which we can make because $V(u_{n_k})=r_{h_k}<r_{s_k}$), then this yields
\[\text{left-hand side of }\eqref{Equation: fk lower bound}
\geq
\limsup\limits_{k\to\infty}\frac{|\psi(u_{n_k})-V(u_{n_k})|}{r_{s_k}-r_{s_k-1}}.\]
By definition of $h_k$, $s_k$, and the sequence of mismatches $\{u_n\}_{n\geq1}$, we note that
\[\psi(u_{n_k})-V(u_{n_k})=V(u_{n_k+1})+g_{u_{n_k+1}}(\boldsymbol\omega)-V(u_{n_k})=r_{s_k}+g_{u_{n_k+1}}(\boldsymbol\omega)-r_{h_k}.\]
Therefore, we get
\[\text{left-hand side of }\eqref{Equation: fk lower bound}
\geq
\limsup\limits_{k\to\infty}\frac{|r_{s_k}+g_{u_{n_k+1}}(\boldsymbol\omega)-r_{h_k}|}{r_{s_k}-r_{s_k-1}}.\]
By the triangle inequality, this yields
\[\text{left-hand side of }\eqref{Equation: fk lower bound}
\geq
\limsup\limits_{k\to\infty}\frac{|r_{s_k}-r_{h_k}|-|g_{u_{n_k+1}}(\boldsymbol\omega)|}{r_{s_k}-r_{s_k-1}}.\]
Given that $h_k<s_k$, we have that
$r_{h_k}\leq r_{s_k-1}$; hence $|r_{s_k}-r_{h_k}|\geq r_{s_k}-r_{s_k-1}$. Therefore,
\begin{align}
\label{Equation: fk lower bound second to last}
\text{left-hand side of }\eqref{Equation: fk lower bound}
\geq
1-\liminf\limits_{k\to\infty}\frac{|g_{u_{n_k+1}}(\boldsymbol\omega)|}{r_{s_k}-r_{s_k-1}}.
\end{align}
Finally, if we use the fact that $z=u_{n_k+1}$ is such that $V(z)=r_{s_k}$, then we get that
\begin{multline*}\liminf\limits_{k\to\infty}\frac{|g_{u_{n_k+1}}(\boldsymbol\omega)|}{r_{s_k}-r_{s_k-1}}
\leq\limsup_{k\to\infty}\frac1{r_{s_k}-r_{s_k-1}}\max_{\substack{z\in G\setminus B\\V(z)\in[0,r_{s_k}]}}|g_z(\boldsymbol\omega)|\\
\leq\limsup_{n\to\infty}\frac{1}{r_n-r_{n-1}}\max_{\substack{z\in G\setminus B\\V(z)\in[0,r_n]}}|g_z(\boldsymbol\omega)|
\leq\limsup\limits_{n\to\infty}\frac{1}{r_n-r_{n-1}}\max_{z\in G:V(z)\in[0,r_n]}|g_z(\boldsymbol\omega)|.\end{multline*}
If we combine this with \eqref{Equation: fk lower bound second to last}, then we finally obtain \eqref{Equation: fk lower bound}.

We now wrap up the proof of Lemma \ref{lem: f_k(Pi_T)>1 proof} by constructing the subsequence that satisfies \eqref{Equation: Lower Bound Subsequence 1} and \eqref{Equation: Lower Bound Subsequence 2}. We claim that this can be done explicitly as follows: First, define
\begin{align}
\label{Equation: Subsequence Infimum 1}
n_0=\inf\{n\geq0:V(u_{n+1})>V(u_n)\geq0\},\end{align}
and then for $k\geq1$, if we are given $n_{k-1}$, then we define
\begin{align}
\label{Equation: Subsequence Infimum 2}
n_k=\inf\{n\geq n_{k-1}+1:V(u_{n+1})>V(u_{n_{k-1}+1})\}.
\end{align}
Indeed, if $n_0<n_1<n_2<\cdots$ are well-defined (in the sense that the corresponding infima are finite), then the only claim in \eqref{Equation: Lower Bound Subsequence 1} and \eqref{Equation: Lower Bound Subsequence 2} that is not immediate from these definitions
is that $V(u_{n_k})< V(u_{n_k+1})$ for all $k\geq1$. Toward this end, suppose by contradiction that
\begin{align}
\label{Equation: Ordered Sequence Contradiction}
V(u_{n_k})\geq V(u_{n_k+1})\qquad\text{for some }k\geq1.
\end{align}
By definition of $n_k$, we have that $V(u_{n_k+1})> V(u_{n_{k-1}+1})$; combining this
with \eqref{Equation: Ordered Sequence Contradiction} implies that $V(u_{n_k})>V(u_{n_{k-1}+1})$,
and thus $u_{n_k}\neq u_{n_{k-1}+1}$ (by definition of $n_k$, this is equivalent to $n_k> n_{k-1}+1$). However, if this were true, then $n=n_k-1\in[n_{k-1}+1,n_k)$ would be such that $V(u_{ n+1})=V(u_{n_k})>V(u_{n_{k-1}-1})$, which contradicts the definition of $n_k$ as the smallest integer in $[n_{k-1}+1,\infty)$ that satisfies this property.

It now only remains to show that the infima in \eqref{Equation: Subsequence Infimum 1} and \eqref{Equation: Subsequence Infimum 2} are finite.
Given that $V$ is bounded below, that $\mathrm{Im}(V)$ has no accumulation point (Assumption \ref{assumption: main} a.), and that $V$'s level sets are finite (Assumption \ref{assumption: main} b.), we conclude that for every $R\in\R$, the set $\{z\in G:V(z)< R\}$ is finite. Therefore, since the points $u_1,u_2,\ldots\in G$ are distinct, $V(u_n)\to\infty$ as $n\to\infty$. This immediately implies that \eqref{Equation: Subsequence Infimum 1} and \eqref{Equation: Subsequence Infimum 2} are finite.
\end{proof}

Now that we have shown that there exists some $k$ such that $f_k(\Pi_S(\boldsymbol\omega))=0\neq f_k(\Pi_T(\boldsymbol\omega))$ for all $\boldsymbol\omega$ in a probability-one event (i.e., $\mathcal L$ intersected with the event where \eqref{eqn: max limit 1} holds), it remains only to address the measurability concern of Remark \ref{measurability concern}. We begin with the following straightforward lemma about the completion of pushforward measures:

\begin{lemma}
\label{pushforward completion lemma}
Let $(\Omega,\mathcal{A},\mu)$ be a probability space, let $(\mathcal{M},\mathcal{B})$ be a measurable space, and let $T:\Omega\to\mathcal M$ be measurable. Suppose that $A\in\mathcal A$ is such that $\mu(A)=1$. If $T(A)$ is in the completion of $\mathcal B$ under $\mu\circ T^{-1}$, then $ \overline{\mu \circ T^{-1}}(T(A))=1$,
where $\overline\cdot$ denotes the completion of a measure.
\end{lemma}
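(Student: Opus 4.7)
The plan is to prove the lemma by unpacking the definition of the completion of a measure and exploiting the trivial set inclusion $A \subseteq T^{-1}(T(A))$. Since $T(A)$ is in the completion of $\mathcal B$ under $\mu \circ T^{-1}$, by definition there exist sets $B_1, B_2 \in \mathcal B$ such that
\[
B_1 \subseteq T(A) \subseteq B_2
\qquad\text{and}\qquad
(\mu \circ T^{-1})(B_2 \setminus B_1) = 0,
\]
and the completed measure satisfies $\overline{\mu \circ T^{-1}}(T(A)) = (\mu \circ T^{-1})(B_1) = (\mu \circ T^{-1})(B_2)$. So the problem reduces to showing that $(\mu \circ T^{-1})(B_2) = 1$.

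For this, I would use the preimage inclusion $A \subseteq T^{-1}(T(A)) \subseteq T^{-1}(B_2)$. Since $T$ is measurable and $B_2 \in \mathcal B$, we have $T^{-1}(B_2) \in \mathcal A$, so by monotonicity of $\mu$ we get $\mu(T^{-1}(B_2)) \geq \mu(A) = 1$. Combining with $\mu(T^{-1}(B_2)) \leq 1$ gives equality, and then $(\mu \circ T^{-1})(B_2) = \mu(T^{-1}(B_2)) = 1$, which yields the conclusion $\overline{\mu \circ T^{-1}}(T(A)) = 1$.

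There is really no substantive obstacle here; the lemma is a routine unfolding of definitions, and its only purpose is to guarantee that the measurability workaround described in Remark \ref{measurability concern} actually produces full-measure sets (so that Lemma \ref{lem: f_k(Pi_S)=0 proof} and Lemma \ref{lem: f_k(Pi_T)>1 proof} together yield disjoint sets $A_S, A_T \in \mathcal F$ of full measure under $\overline{\P}_{\Pi_S}$ and $\overline{\P}_{\Pi_T}$ respectively). The only small care required is to distinguish between the completion of $\mathcal B$ under $\mu \circ T^{-1}$ and the a priori coarser $\sigma$-algebra $\mathcal B$ itself: without appealing to the completion, the set $T(A)$ need not be measurable, and this is precisely why the statement is phrased in terms of the completion.
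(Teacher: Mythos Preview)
Your proof is correct and follows essentially the same approach as the paper: both arguments reduce to the inclusion $A\subseteq T^{-1}(B)$ for any $B\in\mathcal B$ containing $T(A)$, and then use monotonicity of $\mu$ to conclude $\mu\circ T^{-1}(B)=1$. The only cosmetic difference is that you invoke the sandwich characterization of the completion (finding $B_1\subseteq T(A)\subseteq B_2$) whereas the paper uses the equivalent outer-regularity formulation $\overline{\mu\circ T^{-1}}(T(A))=\inf\{\mu\circ T^{-1}(B):B\in\mathcal B,\,T(A)\subseteq B\}$.
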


\begin{proof}
Recall that by definition of completion, if $C$ is in the completion of $\mathcal B$, then \[\overline{\mu\circ T^{-1}}(C) = \inf\limits_{\substack{B\in\mathcal{B}\\C\subset B}}\{\mu\circ T^{-1}(B)\}.\] 
Thus, it suffices to show that for every $B\in\mathcal{B}$ such that $T(A)\subset B$ we have $\mu\circ T^{-1}(B)=1$. 

Toward this end, let $B\in\mathcal B$ be such that $T(A)\subset B$.
For every $\omega\in A$, we have $T(\omega)\in T(A)$ and so $T(\omega)\in B$. Then, $A\subset\{\omega\in\Omega:T(\omega)\in B\}$ and since $\mu(A)=1$, we have $\mu\circ T^{-1}(B)=\mu(\{\omega\in\Omega:T(\omega)\in B\})\geq\mu(A)=1$.
\end{proof}

We now conclude the proof of Theorem \ref{thm: Main} in the case where $V$ is bounded below with the following lemma:

\begin{lemma}
\label{addressing measurability}
Suppose that $V$ is bounded below, and that it satisfies Assumption \ref{assumption: main} a. and b. Suppose in addition that \eqref{eqn: max limit 1} holds.
The measures $\overline{\P}_{\Pi_S}$ and $\overline{\P}_{\Pi_T}$ are mutually singular.
\end{lemma}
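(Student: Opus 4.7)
The plan is to construct two disjoint sets $A_S, A_T \in \mathcal F$ that carry full mass under $\overline{\P}_{\Pi_S}$ and $\overline{\P}_{\Pi_T}$ respectively. Assume without loss of generality that $|S|<|T|$. Let $\mathcal E \subset \R^\N$ denote the intersection of $\mathcal L$ with the event on which \eqref{eqn: max limit 1} holds. Both are Borel subsets of $\R^\N$, and $\P[\mathcal E]=1$ by Assumption \ref{assumption: main} i. and the standing assumption $\P[\mathcal L]=1$. Set
\[A_S := \Pi_S(\mathcal E), \qquad A_T := \Pi_T(\mathcal E).\]

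First I would verify disjointness. Applying Lemmas \ref{lem: f_k(Pi_S)=0 proof} and \ref{lem: f_k(Pi_T)>1 proof} with $k=|S|$, every $\boldsymbol\omega \in \mathcal E$ satisfies $f_{|S|}(\Pi_S(\boldsymbol\omega))=0$ and $f_{|S|}(\Pi_T(\boldsymbol\omega))\geq 1$, because the $\limsup$ appearing on the right-hand side of both bounds vanishes on $\mathcal E$. Hence no $\mu \in \mathbb M$ can belong to both $A_S$ and $A_T$.

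The main obstacle is measurability, since $A_S$ is defined only implicitly via the (possibly nonmeasurable) $f_{|S|}$. I would get around this by appealing to descriptive set theory. Because $\Pi_S$ and $\Pi_T$ are Borel measurable maps between the Polish spaces $\R^\N$ and $\mathbb M$, their graphs are Borel subsets of $\R^\N \times \mathbb M$. Consequently $A_S = \pi_{\mathbb M}\bigl((\mathcal E \times \mathbb M)\cap \mathrm{graph}(\Pi_S)\bigr)$ is an analytic subset of $\mathbb M$ (being the continuous projection of a Borel subset of a Polish product space), and likewise for $A_T$. By the classical theorem that analytic subsets of a Polish space are universally measurable, $A_S$ and $A_T$ belong to the completion of $\mathcal B(\mathbb M)$ under every finite Borel measure on $\mathbb M$. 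In particular, they lie in $\mathcal F_{\Pi_R}$ for every finite $R \subset G$, and hence in $\mathcal F$ by \eqref{equation: completion}.

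Finally, since $\mathcal E \subset \Pi_S^{-1}(A_S)$ tautologically and $\P[\mathcal E]=1$, Lemma \ref{pushforward completion lemma} applied with $T=\Pi_S$ and $A=\mathcal E$ yields $\overline{\P}_{\Pi_S}^0(A_S)=1$; restricting to $\mathcal F$ gives $\overline{\P}_{\Pi_S}(A_S)=1$. The symmetric argument with $T=\Pi_T$ yields $\overline{\P}_{\Pi_T}(A_T)=1$. Combined with $A_S \cap A_T = \varnothing$, this establishes the mutual singularity of $\overline{\P}_{\Pi_S}$ and $\overline{\P}_{\Pi_T}$ on $(\mathbb M, \mathcal F)$, completing the proof in this case.
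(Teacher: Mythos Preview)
Your proposal is correct and follows essentially the same approach as the paper: you define the same sets $A_S=\Pi_S(\mathcal E)$ and $A_T=\Pi_T(\mathcal E)$, invoke analyticity and universal measurability to place them in $\mathcal F$, apply Lemma~\ref{pushforward completion lemma} for full mass, and use Lemmas~\ref{lem: f_k(Pi_S)=0 proof} and~\ref{lem: f_k(Pi_T)>1 proof} for disjointness. The only cosmetic differences are that you spell out the graph/projection description of analyticity and reverse the order of the disjointness and measurability arguments.
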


\begin{proof}
Define the random variables $X$ on $(\R^\N,\mathcal B(\R^\N))$ as
\[X(\boldsymbol \omega)=\limsup\limits_{n\to\infty}\frac{1}{r_n-r_{n-1}}\max_{z\in G:V(z)\in[0,r_n]}|g_z(\boldsymbol\omega)|\]
($X$ is clearly Borel-measurable by virtue of each $g_z$ being Borel measurable).
Define the sets $A_S,A_T\subset\mathbb{M}$ as
\[A_S=\Pi_S(\{\boldsymbol \omega\in\R^\N:X(\boldsymbol \omega)=0\}\cap\mathcal L)
\qquad\text{and}\qquad
A_T=\Pi_T(\{\boldsymbol \omega\in\R^\N:X(\boldsymbol \omega)=0\}\cap\mathcal L).\]
By virtue of being the images of a Borel subset of the polish space $\R^\Z$ through Borel maps, $A_S$ and $A_T$ are analytic sets, hence universally measurable (i.e., measurable with respect to every completion of $\mathcal B(\mathbb M)$); see, e.g., \cite[Theorem 21.10]{SetTheory}. In particular,
$A_S,A_T\in\mathcal F$. By \eqref{eqn: max limit 1} and the assumption that $\mathcal L$ has probability one, $\P(\{\boldsymbol \omega\in\R^\N:X(\boldsymbol \omega)=0\}\cap\mathcal L)=1$. Therefore, it follows from Lemma \ref{pushforward completion lemma} that
\[\overline{\P}_{\Pi_S}(A_S)=\overline{\P\circ\Pi_S^{-1}}(\Pi_S(\{\boldsymbol \omega\in\R^\N:X(\boldsymbol \omega)=0\}\cap\mathcal L))=1,\]
and similarly $\overline{\P}_{\Pi_T}(A_T)=1$.

By Lemmas \ref{lem: f_k(Pi_S)=0 proof} and \ref{lem: f_k(Pi_T)>1 proof}, for every $\boldsymbol\omega\in\mathcal L$,
the maps $f_k$ satisfy the following:
\begin{align*}
\text{if $k=|S|$, then }& f_k(\Pi_S(\boldsymbol\omega))\leq X(\boldsymbol \omega);\\
\text{if $k<|T|$, then }& f_k(\Pi_T(\boldsymbol\omega))\geq 1-X(\boldsymbol \omega).
\end{align*}
Given that $A_S$ (resp. $A_T$) only includes images of $\boldsymbol\omega\in\mathcal L$ through $\Pi_S$ (resp. $\Pi_T$) for which $X(\boldsymbol\omega)=0$, this means that
\[A_S\subset\{\mu\in\mathbb M:f_{|S|}(\mu)\leq0\}
\qquad\text{and}\qquad
A_T\subset\{\mu\in\mathbb M:f_{|S|}(\mu)\geq1\}.\]
In particular, $A_S\cap A_T =\varnothing$, which concludes the proof.
\end{proof}

\subsection{Cases 2 and 3. $V$ is Bounded Above or Unbounded}
\label{remaining cases}
In the cases where $V$ is bounded above (and Assumption \ref{assumption: main} ii. holds) or where $V$ is neither bounded below nor bounded above (and Assumption \ref{assumption: main} iii. holds),
the proof of Theorem \ref{thm: Main} is nearly identical with minor changes that we illustrate below.

When $V$ is bounded above, we must first make a minor modification to the functional $f_k$ in order to account for the fact that the image of $V$ now has infinitely many negative outputs $\{t_n\}_{n\geq0}$ and only finitely many positive outputs. To do this, we define
\[\hat f_k(A) = \inf\limits_{\substack{B\subset G\\|B|=k}}~\inf\limits_{\substack{\psi:G\setminus{B}\to A\\\text{bijection}}}
~\limsup\limits_{n\to\infty}\frac{1}{|t_n-t_{n-1}|}
~\max_{\substack{z\in G\setminus B\\V(z)\in[t_n,0]}}|\psi(z)-V(z)|.\]

Using the functional $\hat f_k$, the proof follows identically to the one under Assumption \ref{assumption: main} i. In particular, we can repeat the proof of Lemma \ref{lem: f_k(Pi_S)=0 proof} using the exact same $\psi':G\setminus S\to\Pi_S(\boldsymbol \omega)$ such that $\psi'(z)=V(z)+g_z(\boldsymbol \omega)$ in order to get \[\hat f_{|S|}(\Pi_S(\boldsymbol \omega))\leq\limsup\limits_{n\to\infty}\frac{1}{|t_n-t_{n-1}|}\max_{z\in G:V(z)\in[t_n,0]}|g_z(\boldsymbol \omega)|,\qquad\boldsymbol\omega\in\mathcal L.\]

Similarly, by direct analogy to the proof of Lemma \ref{lem: f_k(Pi_T)>1 proof}, given any set $B\subset G$ with $B=k<|T|$ and a bijection $\psi$, we construct an infinite chain of mismatches $u_0,u_1,u_2,\ldots$ such that $\psi(u_n)=V(u_{n+1})+g_{u_{n+1}}(\boldsymbol \omega)$. Then by a symmetric argument to the one in the first case we can extract a subsequence satisfying the same properties as the one in the first case but going in the opposite direction (i.e. $V(u_{n_0+1})>V(u_{n_1+1})>V(u_{n_2+1})>\cdots $ and $0\geq V(u_{n_k})>V(u_{n_k+1})$). Using this monotone subsequence allows us to follow the same chain of inequalities as in Lemma \ref{lem: f_k(Pi_T)>1 proof} and get 
\[\hat f_k(\Pi_T(\boldsymbol \omega))\geq1-\limsup\limits_{n\to\infty}\frac{1}{|t_n-t_{n-1}|}\max_{z\in G:V(z)\in[t_n,0]}|g_z(\boldsymbol \omega)|,\qquad\boldsymbol\omega\in\mathcal L,\] 
for $0\leq k<|T|$.

At this point, if we restrict to the event where \eqref{eqn: max limit 2} holds, then we can find some $k\geq0$ such that $\hat f_k(\Pi_S)\leq0<1\leq\hat f_k(\Pi_T)$; the mutual singularity is then proved from this using Lemmas \ref{pushforward completion lemma} and \ref{addressing measurability}.

Under Assumption \ref{assumption: main} iii., we once again modify the functional, and consider $h_k = \max\{f_k,\hat f_k\}$. We immediately get by choosing $\psi':G\setminus S\to\Pi_S(\boldsymbol \omega)$ such that $\psi'(z)=V(z)+g_z(\boldsymbol \omega)$, that $h_{|S|}(\Pi_S(\boldsymbol \omega))\leq 0$ whenever $\boldsymbol\omega\in\mathcal L$ satisfies both \eqref{eqn: max limit 1} and \eqref{eqn: max limit 2} (this is clear since that inequality holds for both $f_{|S|}$ and $\hat f_{|S|}$ by the previous two cases). 

Once again using the same argument as in the proof of Lemma \ref{lem: f_k(Pi_T)>1 proof}, given a set $B\subset G$ with $B=k<|T|$ and a bijection $\psi$ we can construct an infinite chain of mismatches, $u_0,u_1,u_2,\ldots$ such that $\psi(u_n)=V(u_{n+1})+g_{u_{n+1}}(\boldsymbol \omega)$. The key difference in this case comes when we construct the subsequence that we use in the chain of inequalities. Because $V$ is unbounded in both directions we do not know a priori whether we can construct an infinite subsequence that is increasing or decreasing (i.e. $V(u_{n_{i+1}+1})>V(u_{n_i+1})$,
as in Case 1., or $V(u_{n_{i+1}+1})<V(u_{n_i+1})$, as in Case 2.). Nevertheless, Assumption \ref{assumption: main} a. and b. still implies that at least one of the following claims must hold:
\[\limsup_{n\to\infty}V(u_n)=\infty\qquad\text{or}\qquad
\liminf_{n\to\infty}V(u_n)=-\infty.\]
In particular, it is clear by direct analogy to the arguments in the proof of Lemma \ref{lem: f_k(Pi_T)>1 proof} that at least one of these subsequences (i.e., increasing or decreasing) must exist. This means that either $f_k(\Pi_T(\boldsymbol \omega)) \geq 1$ or $\hat{f}_k(\Pi_T(\boldsymbol \omega))\geq1$ whenever $\boldsymbol\omega\in\mathcal L$ satisfies both \eqref{eqn: max limit 1} and \eqref{eqn: max limit 2}, which immediately gives us the desired $h_k(\Pi_T(\boldsymbol \omega))\geq1$.
We then get the mutual singularity by
Lemmas \ref{pushforward completion lemma} and \ref{addressing measurability}. 

\section{Proofs of Remaining Results}

\subsection{Proof of Theorem \ref{thm: Zd Classical}}
\label{Proof of Zd classical}
The deletion tolerance of $\Pi$ for $\alpha<1/2$ follows immediately as a corollary to the first part of Proposition \ref{prop: variance obstacle}; we will prove this in the next section.

The deletion singularity of $\Pi$ for $\alpha>d/2$ is proved using the following application of Proposition \ref{prop: variance linear statistics}: Suppose that the $g_z$'s are i.i.d. copies of the random variable $g$.
Using the collection of test functions $f_n(x)=\mathrm{e}^{-x/n}$ we get: 
\begin{align*}
    \mathrm{Var}[f_{n}(\Pi)]=\sum\limits_{m\in\Z^d}\mathrm{Var}[f_n(\norm{m}^\alpha+g)]&\leq C\sum\limits_{m\in\N}m^{d-1}\mathrm{Var}[f_n(m^\alpha+g)]  \\& =C\mathrm{e}^{\sigma^2/n^2}(\mathrm{e}^{\sigma^2/n^2}-1)\sum\limits_{m\in \N}m^{d-1}\mathrm{e}^{-2m^\alpha/n},
\end{align*}
where we used the variance of the lognormal distribution and $|\{k\in\Z^d:\norm{k}=m\}|\leq Cm^{d-1}$ for a positive constant $C$ depending only on $d$.

We then have:
\begin{align*}
    C\mathrm{e}^{\sigma^2/n^2}(\mathrm{e}^{\sigma^2/n^2}-1)\sum\limits_{m\in \N}m^{d-1}\mathrm{e}^{-2m^\alpha/n} & =C\mathrm{e}^{\sigma^2/n^2}(\mathrm{e}^{\sigma^2/n^2}-1)\sum\limits_{m\in \frac{\N}{n^{1/\alpha}}}(mn^{1/\alpha})^{d-1}\mathrm{e}^{-2m^\alpha} \\& = C\mathrm{e}^{\sigma^2/n^2}(\mathrm{e}^{\sigma^2/n^2}-1)n^{(d-1)/\alpha}n^{1/\alpha}\frac{1}{n^{1/\alpha}}\sum\limits_{m\in \frac{\N}{n^{1/\alpha}}}m^{d-1}\mathrm{e}^{-2m^\alpha} \\&= C\mathrm{e}^{\sigma^2/n^2}(\mathrm{e}^{\sigma^2/n^2}-1)n^{d/\alpha}\frac{1}{n^{1/\alpha}}\sum\limits_{m\in \frac{\N}{n^{1/\alpha}}}m^{d-1}\mathrm{e}^{-2m^\alpha}
\end{align*}
Then, $\lim\limits_{n\to\infty}C\mathrm{e}^{\sigma^2/n^2}=C$ and by l'H\^{o}pital's rule, \[\lim\limits_{n\to\infty}(\mathrm{e}^{\sigma^2/n^2}-1)n^{d/\alpha}=\lim\limits_{n\to\infty}\frac{(\mathrm{e}^{\sigma^2/n^2}-1)}{n^{-d/\alpha}} = \lim\limits_{n\to\infty}\frac{2\sigma^2\alpha\mathrm{e}^{\sigma^2/n^2}}{dn^{2-d/\alpha}}\] which we can see goes to $0$ if $\alpha>d/2$.

Finally, using a Riemann sum, \[\lim\limits_{n\to\infty}\frac{1}{n^{1/\alpha}}\sum\limits_{m\in \frac{\N}{n^{1/\alpha}}}m^{d-1}\mathrm{e}^{-2m^\alpha}=\int\limits_0^\infty x^{d-1}\mathrm{e}^{-2x^\alpha}dx\]
which is finite for all $\alpha>0$.

Thus, $\lim\limits_{n\to\infty}\mathrm{Var}[f_{n}(\Pi)]=0$ for $\alpha>d/2$ and we have deletion singularity for $\Pi$ if $\alpha>d/2$ by Proposition \ref{prop: variance linear statistics}. \qed

\subsection{Proof of Proposition \ref{prop: variance obstacle} (1)}
\label{Proof of Variance Obstacle (1)}

Let $S\subset\Z^d$ with $|S|=n$. We start by enumerating $\Z^d=\{z_0,z_1,z_2,\ldots\}$ in non-decreasing order of norm and take the random vector $P=(\norm{z_i}^\alpha+g_{i})_{i\in\N} \in \R^{\N}$ with components given by $\Pi$ ordered by our chosen enumeration. We would like to compare this with the random vector $P_S$ that has components given by $\Pi_S$ ordered by the same enumeration of $\Z^d$. However, if we simply remove the entries in $P$ that came from $S\subset G$, we would be left with null coordinates in the vector $P_S$. In order to avoid null entries where the deletions would be, we must first shift the coordinates of $P_S$ to align them with $P$. 

To that end, we define the non-decreasing, surjective, deletion counting function $s:\N\to\{0,...,n\}$
\[s(i) = | S\cap\{z_0,...,z_i\}|.\]
Using this function, we define the vector $P_S=(\norm{z_{i+s(i)}}^\alpha+g_{i+s(i)})_{i\in\N}\in\R^{\N}$ that has components given by $\Pi_S$ ordered by our chosen enumeration of $\Z^d$.

We make one final simplification, in order to center the coordinates of our random vectors, by applying the pushforward map  \[\Xi((a_i)_{i\in\N})= (a_{i}+\norm{z_i}^\alpha)_{i\in\N}\] to both vectors. Then, $\Xi(X)$ gives the vector $P$ and $\Xi(Y)$ gives the vector $P_S$ where 
\[X=(g_{i})_{i\in\N} \quad Y=(g_{i+s(i)}+\norm{z_{i+s(i)}}^\alpha-\norm{z_i}^\alpha)_{i\in\N}\]

Since pushforwards preserve absolute continuity it suffices to show that the vectors $X$ and $Y$ are mutually absolutely continuous if and only if $\alpha<1/2$. Recall that $\{g_i\}_{i\in\N}$ were taken to be i.i.d so we can rewrite $Y$ equivalently as $(g_{i}+\norm{z_{i+s(i)}}^\alpha-\norm{z_i}^\alpha)_{i\in\N}$ in order to simplify notation.

Notice that $Y$ is simply a shift of $X$ since $Y_i=X_i+(\norm{z_{i+s(i)}}^\alpha-\norm{z_i}^\alpha)$. Then it follows from \cite[Theorem 1]{Shepp} that $X$ and $Y$ are mutually absolutely continuous if and only if the difference of their means converges in $\ell^2$.

Therefore, it suffices for mutual absolute continuity to show that the sum 
\begin{align} \label{eqn:l^2 sum}
\sum\limits_{i=0}^\infty(\norm{z_{i+s(i)}}^\alpha-\norm{z_i}^\alpha)^2
\end{align}
converges if and only if $\alpha<1/2$. 

Now notice that when $s(i)=1$, for each $k\in\N$ and all $i$ such that $\norm{z_i}=k$, we have that $\norm{z_i}^\alpha$ and $\norm{z_{i+1}}^\alpha$ can differ for at most $1$ value of $i$ and that the difference will be $(k+1)^\alpha-k^\alpha$. Similarly, when $s(i)=2$, for each $k\in\N$ and all $i$ such that $\norm{z_i}=k$ we have that $\norm{z_i}^\alpha$ and $\norm{z_{i+2}}^\alpha$ can differ for at most $2$ values of $i$ and that the difference will be at most $(k+2)^\alpha-k^\alpha$. Continuing up to $s(i)=n$, for each $k\in\N$ and all $i$ such that $\norm{z_i}=k$ we have that $\norm{z_i}^\alpha$ and $\norm{z_{i+n}}^\alpha$ can differ for at most $n$ values of $i$ and that the difference will be at most $(k+n)^\alpha-k^\alpha$. Finally, by definition of $s(i)$, there exists $M$ sufficiently large such that $s(i)=n$ for all $i>M$. We can now bound the sum as follows:
\[\sum\limits_{i=0}^\infty(\norm{z_{i+s(i)}}^\alpha-\norm{z_i}^\alpha)^2\leq n^2\sum\limits_{i=0}^\infty((i+n)^\alpha-i^\alpha)^2\] which converges for $\alpha<1/2$ by an immediate application of the mean value theorem.

We've now shown that for $\alpha<1/2$, $P$ and $P_S$ are mutually absolutely continuous for any $S\subset \Z^d$ satisfying $|S|<\infty$. Notice that for the specific choice, $S=\{0\}$, the sum \eqref{eqn:l^2 sum} reduces to $\sum\limits_{i=0}^\infty((i+1)^\alpha-i^\alpha)^2$ which converges only if $\alpha<1/2$. Thus we get that $P$ and $P_S$ are mutually absolutely continuous for every finite cardinality $S$ if and only if $\alpha<1/2$. \qed

\subsection{Proof of Proposition \ref{prop: variance obstacle} (2)}
\label{Proof of Variance Obstacle (2)}

Throughout this proof, we let $C$ denote a positive constant independent of $n$ whose exact value may change from line to line.
Let the $g_z$'s be i.i.d. copies of $g$.
By passing to the subsequence $r_n=n^\alpha$ we can bound the variance of the linear statistic as follows:

\begin{align*}
\mathrm{Var}[f_{r_n}(\Pi)]=\sum\limits_{m\in\Z^d}\mathrm{Var}[f((\frac{\norm{m}}{n})^\alpha+\frac{g}{n^\alpha})]&\geq C\sum\limits_{m\in\N}m^{d-1}\mathrm{Var}[f((\frac{m}{n})^\alpha+\frac{g}{n^\alpha})]\\ &\geq n^{-2\alpha}C\sum\limits_{m\in\N}m^{d-1}\E[f'((\frac{m}{n})^\alpha+\frac{g}{n^\alpha})^2],
\end{align*}
where the first inequality follows from the fact that there is on the order of $n^{d-1}$ points $z\in\Z^d$ such that $\norm{z}=n$ as $n\to\infty$, and the final inequality follows from \cite[Proposition 3.2]{VarianceBound}. We know that there exists an interval $[a,b]\subset\R$ and a real constant $\delta>0$ such that $f'(x)^2\geq\delta$ for all $x\in[a,b]$. Therefore we get:
\begin{align*}
\mathrm{Var}[f_{r_n}(\Pi)]  \geq &Cn^{-2\alpha}\sum\limits_{m\in\N}m^{d-1}\E[f'((\frac{m}{n})^\alpha+\frac{g}{n^\alpha})^2] \\ \geq &Cn^{-2\alpha}\sum\limits_{m\in\N} m^{d-1}\P[(\frac{m}{n})^\alpha+\frac{g}{n^\alpha}\in[a,b]]  \\\geq &Cn^{-2\alpha}\sum_{m=\lceil a^{1/\alpha}n\rceil}^{\lfloor b^{1/\alpha}n\rfloor} n^{d-1}\P[g\in[n^{\alpha}a-m^\alpha,n^{\alpha}b-m^\alpha]]  \\\geq &C\delta n^{-2\alpha}n^d,
\end{align*}
where the first inequality is the same as in the previous display; the second inequality follows from
$\mathbb E[f'(X)^2]\geq\mathbb E[\delta\mathbf 1_{[a,b]}(X)]=\delta\mathbb P[X\in[a,b]]$;
the third inequality follows from restricting which values of $m$ we are summing over, then taking $m\geq\lceil a^{1/\alpha}n\rceil$, then noting that
\[(\frac{m}{n})^\alpha+\frac{g}{n^\alpha}\in[a,b]\iff g\in[n^\alpha a-m^\alpha,n^\alpha b-m^\alpha];\]
and finally the last inequality follows from the fact that
\begin{multline*}\inf_{\substack{y\in [an^\alpha,bn^\alpha]\\n\geq1}}\mathbb P[g\in[n^{\alpha}a-y,n^{\alpha}b-y]]=\inf_{\substack{y\in [an^\alpha,bn^\alpha]\\n\geq1}}\int_{n^{\alpha}a-y}^{n^{\alpha}b-y}\frac{e^{-x^2/2\sigma^2}}{\sqrt{2\pi\sigma^2}}dx
=\int_{0}^{(b-a)}\frac{e^{-x^2/2\sigma^2}}{\sqrt{2\pi\sigma^2}}dx>0\end{multline*}
because the standard Gaussian density function is symmetric decreasing, and that the number of points $m\in\N$ such that 
$\lceil a^{1/\alpha}n\rceil\leq m\leq \lfloor b^{1/\alpha}n\rfloor$ is bounded below by a constant times $n$.
The proof is thus complete. \qed

\subsection{Proof of Example \ref{Ex: |S|=|T|}}
\label{Proof of S=T}
This proof follows very similarly to Section \ref{Proof of Variance Obstacle (1)} again employing Proposition \ref{prop: Kakutani}.

We consider the same enumeration of $\Z^d$ as in Section \ref{Proof of Variance Obstacle (1)} where we enumerate elements in non-decreasing order of norm.

Like before we begin with some simplifications. Following the notation of Proposition \ref{prop: Kakutani}, we consider the random vectors $P_S$ and $P_T$ in $\R^{\N}$ with components generated by the processes and apply a shift that allows us to align the vectors and ensure that there are no null coordinates where the deletions would be:

Define two non-decreasing, surjective, deletion counting function $s,t:\N\to\{0,...,n\}$
\[s(i) = | S\cap\{z_0,...,z_i\}| \quad t(i) = | T\cap\{z_0,...,z_i\}|.\]

Like in Section \ref{Proof of Variance Obstacle (1)}, we use $s$ and $t$ to shift the coordinates of the vectors $P_S$ and $P_T$ to get \[P_S=(\norm{z_{i+s(i)}}+g_{i+s(i)})_{i\in\N}\quad P_T= (\norm{z_{i+t(i)}}+g_{i+t(i)})_{i\in\N}\]

Then we apply the same centering pushforward map we used before:
\[\Xi((a_i)_{i\in\N})= (a_{i}+\norm{z_i}^\alpha)_{i\in\N}\]

and get that $\Xi(X) = P_S$ and $\Xi(Y)= P_T$ where
\[X= (g_{i+s(i)}+\norm{z_{i+s(i)}}^\alpha-\norm{z_i}^\alpha)_{i\in\N}\quad Y=(g_{i+t(i)}+\norm{z_{i+t(i)}}^\alpha-\norm{z_i}^\alpha)_{i\in\N}\]

Once again $Y$ is simply a shift of $X$ and so we can apply \cite[Theorem 1]{Shepp} to get that $X$ and $Y$ are mutually absolutely continuous if and only if the $\ell^2$ difference in their means is finite. Hence, we consider 
\begin{align}
\sum\limits_{i=0}^\infty(\norm{z_{i+s(i)}}^\alpha-\norm{z_{i+t(i)}}^\alpha)^2
\end{align}

However, this sum is clearly finite since by definition of $s$ and $t$, there exists $M\in\N$ such that for all $i>M$ we have $s(i)=t(i)=n$ and so 
\[
\sum\limits_{i=0}^\infty(\norm{z_{i+s(i)}}^\alpha-\norm{z_{i+t(i)}}^\alpha)^2=\sum\limits_{i=0}^M(\norm{z_{i+s(i)}}^\alpha-\norm{z_{i+t(i)}}^\alpha)^2<\infty
\]
Thus, $P_S$ and $P_T$ are mutually absolutely continuous whenever $|S|=|T|<\infty$. \qed

\subsection{Proof of Example \ref{Ex: double-sided counterexample}}
\label{Proof of doublesided}
We wish to show that despite fulfilling the growth condition for Theorem \ref{thm: Main} in the negative direction, this point process is nonetheless not deletion singular because the process fails to meet the condition in the positive direction (i.e, $V$ is unbounded both above and below and $\Pi$ satisfies (\ref{eqn: max limit 2}) but not (\ref{eqn: max limit 1})).

 We prove this by showing that $\Pi$ and $\Pi_{\{0\}}$ are mutually absolutely continuous by making a similar argument using Proposition \ref{prop: Kakutani} that we've done twice now in Sections \ref{Proof of Variance Obstacle (1)} and \ref{Proof of S=T}

 Following the notation of Proposition \ref{prop: Kakutani}, we let $P$ and $P_{\{0\}}$ be the random vectors in $\R^{\N}$ with components generated by the processes $\Pi$ and $\Pi_{\{0\}}$ respectively. We wish to shift the positive elements of $\Pi_{\{0\}}$ to cover the null entry caused by deleting $0$, while leaving the negative elements in their place. As such, we let 
 \begin{align*}
 P &= (0+g_0,1^\alpha+g_1,-1^2+g_{-1},2^\alpha+g_2,-2^2+g_{-2},\ldots), \\\\
P_{\{0\}} &=(1^\alpha+g_1,2^\alpha+g_2,-1^2+g_{-1},3^\alpha+g_3,-2^2+g_{-2},\ldots).
 \end{align*}

Applying \cite[Theorem 1]{Shepp}, we get that $P$ and $P_{\{0\}}$ are mutually absolutely continuous if the $\ell^2$ difference in their means converges, i.e.,
\[\sum\limits_{n=0}^\infty((n+1)^\alpha-n^\alpha)^2<\infty.\] This clearly holds for $\alpha<1/2$ and thus $P$ and $P_{\{0\}}$ are mutually absolutely continuous for $\alpha<1/2$. Finally, the absolute continuity of $\Pi$ and $\Pi_{\{0\}}$ follows by Proposition \ref{prop: Kakutani}. \qed

\subsection{Proof of Example \ref{example: shifted lattice}}
\label{Proof of shifted lattice}
For $\alpha=1$ we have the processes $\Pi_{\{1,...,m\}} = \sum\limits_{k>m}\delta_{k+g}$ and $\Pi_{\{1,...,n\}}=\sum\limits_{k>n}\delta_{k+g}$. Note that, by a simple change of variables, we can rewrite these as
\[\Pi_{\{1,...,m\}} =\sum_{k=1}^\infty\delta_{k+(g+m)}
\qquad\text{and}\qquad
\Pi_{\{1,...,n\}}=\sum_{k=1}^\infty\delta_{k+(g+n)}.\]
Our assumption that $g$ and $g+k$ are mutually absolutely continuous for $k\in\Z$ implies that $g+m$ and $g+n$ are mutually absolutely continuous for any fixed $m,n\in\Z$; it immediately follows that the processes $\Pi_{\{1,...,m\}}$ and $\Pi_{\{1,...,n\}}$ are mutually absolutely continuous.

Now for $\alpha\in(0,1)\cup(1,\infty)$, we will construct a measurable function $f$ that distinguishes between the processes $\Pi$ and $\Pi_S$ for any $S\subset\N$. For a countable set $A=\{x_i\}_{i\in\N}\subset\R$ enumerated in non-decreasing order of $x_i$, define $f_n(A)=|x_{n+1}-x_n|$. Then for every $n\in\N$, we have that $f_n(\Pi)=(n+1)^\alpha-n^\alpha$. When computing $f_n(\Pi_S)$ we consider two cases. When $S=\{1,\ldots,k\}$, we see that $f_{1}(\Pi_S) = (k+2)^\alpha-(k+1)^\alpha>f_1(\Pi)=2^\alpha-1$. For any other $S$, there exists $n\in S$ such that $n-1\notin S$. Then $f_{n-1}(\Pi_S)>n^\alpha-(n-1)^\alpha=f_{n-1}(\Pi)$. It follows that $\Pi$ and $\Pi_S$ are singular for every choice of $S$ and $\Pi$ is deletion singular. \qed

\newpage

\appendix
\section{Illustration of the Chain of Mismatches}

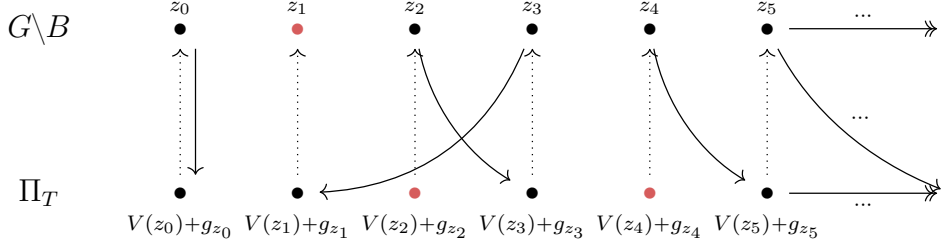
\begin{figure}[hbp]
\begin{center}
\begin{tikzcd}
	{G\setminus B} & {\bullet } & \textcolor{rgb,255:red,214;green,92;blue,92}{\bullet} & \bullet & \bullet & \bullet & \bullet && {\text{}} \\
	\\
	{\Pi_T} & {\bullet } & \bullet & \textcolor{rgb,255:red,214;green,92;blue,92}{\bullet} & \bullet & \textcolor{rgb,255:red,214;green,92;blue,92}{\bullet} & \bullet && {\text{}}
	\arrow["{z_0}"', shift right=4, draw=none, from=1-2, to=1-2, loop, in=55, out=125, distance=10mm]
	\arrow[shift left=2, from=1-2, to=3-2]
	\arrow["{z_1}"', shift right=4, draw=none, from=1-3, to=1-3, loop, in=55, out=125, distance=10mm]
	\arrow["{z_2}"', shift right=4, draw=none, from=1-4, to=1-4, loop, in=55, out=125, distance=10mm]
	\arrow[curve={height=12pt}, from=1-4, to=3-5]
	\arrow["{z_3}"', shift right=4, draw=none, from=1-5, to=1-5, loop, in=55, out=125, distance=10mm]
	\arrow[curve={height=-24pt}, from=1-5, to=3-3]
	\arrow["{z_4}"', shift right=4, draw=none, from=1-6, to=1-6, loop, in=55, out=125, distance=10mm]
	\arrow[curve={height=12pt}, from=1-6, to=3-7]
	\arrow["{z_5}"', shift right=4, draw=none, from=1-7, to=1-7, loop, in=55, out=125, distance=10mm]
	\arrow["\cdots", two heads, from=1-7, to=1-9]
	\arrow["\cdots", curve={height=12pt}, from=1-7, to=3-9]
	\arrow[dotted, from=3-2, to=1-2]
	\arrow["\footnotesize{{V(z_0)+g_{z_0}}}"', shift left=4, draw=none, from=3-2, to=3-2, loop, in=300, out=240, distance=5mm]
	\arrow[dotted, from=3-3, to=1-3]
	\arrow["\footnotesize{{V(z_1)+g_{z_1}}}"', shift left=4, draw=none, from=3-3, to=3-3, loop, in=300, out=240, distance=5mm]
	\arrow[dotted, from=3-4, to=1-4]
	\arrow["\footnotesize{{V(z_2)+g_{z_2}}}"', shift left=4, draw=none, from=3-4, to=3-4, loop, in=300, out=240, distance=5mm]
	\arrow[dotted, from=3-5, to=1-5]
	\arrow["\footnotesize{{V(z_3)+g_{z_3}}}"', shift left=4, draw=none, from=3-5, to=3-5, loop, in=300, out=240, distance=5mm]
	\arrow[dotted, from=3-6, to=1-6]
	\arrow["\footnotesize{{V(z_4)+g_{z_4}}}"', shift left=4, draw=none, from=3-6, to=3-6, loop, in=300, out=240, distance=5mm]
	\arrow[dotted, from=3-7, to=1-7]
	\arrow["\footnotesize{{V(z_5)+g_{z_5}}}"', shift left=4, draw=none, from=3-7, to=3-7, loop, in=300, out=240, distance=5mm]
	\arrow["\cdots"', two heads, from=3-7, to=3-9]
\end{tikzcd}
\caption{For sake of clarity, we enumerate the elements of $G$ as $z_0,z_1,z_2,\ldots$, and we drop the dependence of $\Pi_T$ and $g_z$ on $\boldsymbol\omega$ in the figure above.
The black dots on the top row represent the elements of $G\setminus S$, and the black dots on the bottom row represent the elements of $\Pi_T(\boldsymbol \omega)$; in both cases, red dots are deleted.
(In the above example, we delete $B=\{z_1\}$ from $G$, and we delete $T=\{z_2,z_4\}$ from $\Pi$.) The arrows pointing downward represent the outputs of the bijection $\psi$ (e.g., $\psi(z_0)=V(z_0)+g_{z_0}(\boldsymbol \omega)$, and $\psi(z_2)=V(z_3)+g_{z_3}(\boldsymbol \omega)$).
If we begin the algorithm described in the proof of Lemma \ref{lem: f_k(Pi_T)>1 proof} with the initial value $u_0=z_2\in T\setminus B$, then we get the finite sequence $(u_0,u_1,u_2)=(z_2,z_3,z_1)$. (This sequence can be visualized by following along the path traced by alternating dotted arrows and full arrows, starting from $V(u_0)+g_{u_0}(\boldsymbol \omega)$.) This sequence terminates at $n_\star=2$ because $u_2=z_1\in B$ is not mapped anywhere by $\psi$. Once that happens, we can now restart the algorithm with the new initial point $u_0'=z_4\in T\setminus B$. This point cannot have appeared in the previous sequence $u_0,u_1,u_2$, because it is not in the image of $\psi$. Moreover, Because all points in $B=\{z_1\}$ have been exhausted by the previous iteration of the algorithm, the algorithm is now guaranteed to generate an infinite sequence.} 
\label{Figure: Algorithm}
\end{center}
\end{figure}

\newpage
\bibliographystyle{alpha}
\bibliography{Bibliography}

 \end{document}